\documentclass[gdweb]{geradwp}

\PassOptionsToPackage{hyphens}{url}

\usepackage[french]{babel}
\usepackage{hyperref}
\usepackage{tikz}
\usetikzlibrary{arrows.meta, positioning}
\usepackage{tikz}
\usepackage{amsmath}
\usepackage{amssymb}
\usepackage{amsthm}  
\usepackage{url}
\usepackage{placeins}
\usepackage{verbatim}
\usepackage{hyperref}
\usepackage{algorithm}
\usepackage{algpseudocode}
\usetikzlibrary{positioning, shapes.geometric, arrows.meta, decorations.pathreplacing}
\usepackage[sort,numbers]{natbib}
\usepackage{amsthm}
\theoremstyle{plain}

\newtheorem{theorem}{\sffamily Theorem}
\newtheorem{proposition}{\sffamily Proposition}

\theoremstyle{definition}

\newtheorem{assumption}{\sffamily Assumption}

\theoremstyle{remark}

\newcolumntype{C}[1]{>{\centering\arraybackslash}p{#1}}

\usepackage{tikz}
\usepackage{rotating}
\usepackage{adjustbox}
\usepackage{makecell}
\usepackage{multicol}
\usepackage{amsmath}
\usepackage{amssymb}
\usetikzlibrary{positioning,chains,fit,shapes,calc,arrows.meta,                        
	  backgrounds,                        
	  ext.paths.ortho,                    
	  ext.positioning-plus,               
	  }
\usepackage{tikz}
\usepackage{tkz-graph}
\usepackage{tkz-berge}
\usepackage{float}
\usetikzlibrary{calc}
\usetikzlibrary{decorations.pathreplacing}
\usetikzlibrary{arrows.meta}
\usepackage{footmisc}
\usepackage{pgfplots}
\pgfplotsset{compat=1.18}
\usetikzlibrary{spy}
\usepackage{newtxtext,newtxmath}
\usepackage{svg}
\usepackage{fontawesome}
\usepackage{amsthm}
\usepackage{mathtools}
\usepackage{enumitem}
\usepackage{soul}
\usepackage{graphicx}
\usepackage{longtable}
\usepackage{graphicx}   
\usepackage{tikz}
\usetikzlibrary{arrows.meta}
\usetikzlibrary{shapes.geometric, arrows}
\usepackage{tikz}
\usepackage{tkz-graph}
\usetikzlibrary{decorations.pathreplacing}

\definecolor{grayA}{RGB}{90,90,90}
\definecolor{grayB}{RGB}{140,140,140}
\definecolor{grayC}{RGB}{180,180,180}
\definecolor{grayD}{RGB}{220,220,220}
\definecolor{lightblue}{RGB}{120,170,255}
\definecolor{darkblue}{RGB}{5,15,80}
\soulregister\cite7
\soulregister\ref7
\soulregister\pageref7
\tikzstyle{startstop} = [rectangle, rounded corners, minimum width=3cm, minimum height=1.2cm, text centered, draw=black, fill=gray!50]
\tikzstyle{process} = [rectangle, minimum width=4cm, minimum height=1.2cm, text centered, draw=black, fill=white]
\tikzstyle{repeated} = [rectangle, minimum width=4cm, minimum height=1.2cm, text centered, draw=black, fill=gray!20, dashed]
\tikzstyle{decision} = [diamond, minimum width=3cm, minimum height=1.5cm, text centered, draw=black, fill=gray!30]
\tikzstyle{arrow} = [thick,->,>=stealth]

\GDcoverpagewhitespace{6.8cm}
\graphicspath{{Figures/}} 
\hypersetup{colorlinks,%
citecolor={blue}, 
urlcolor={blue},
linkcolor={blue},
breaklinks={true}
}

\makeatletter
\ifthenelse{\isundefined{\ALG@name}}{}%
{%
\renewcommand{\ALG@name}{\sffamily\footnotesize Algorithm}
}
\makeatother
\ifthenelse{\isundefined{\SetAlCapNameFnt}}{}%
{%
\SetAlCapNameFnt{\footnotesize}
\SetAlCapFnt{\sffamily\footnotesize}
}

\GDauthorsShort{A.~Abdellaoui, L.~Benabbou, I.~El~Hallaoui}
\GDauthorsCopyright{Abdellaoui, El~Hallaoui, Benabbou}
\GDpostpubcitation{Hamel, Benoit, Karine H\'ebert (2024). 
``Un exemple de citation'', \emph{Journal of Journals}, 
vol. X issue Y, p. n-m}{https://www.gerad.ca/fr}
\GDsupplementname{Internet Appendix}
\GDrevised{Mai}{May}{2024}

\title{\textbf{Learning to recover: Adaptive local branching with reinforcement learning for log-truck routing and scheduling under disruptions}}

\author{
Abdelhakim Abdellaoui$^{1,3,}$\thanks{Corresponding author: abdelhakim.abdellaoui@polymtl.ca} \quad
Loubna Benabbou$^{2}$ \quad
Issmail El Hallaoui$^{1}$ 
 \\
 Fran\c{c}ois Aub\'e$^{3}$\quad
Mouloud Amazouz$^{3}$
\\[1em]
{\small \textit{$^{1}$Department of Mathematics and Industrial Engineering, Polytechnique Montr\'eal, succ. Centre-ville, Montr\'eal, Qu\'ebec, H3C 3A7, Canada}} \\
{\small \textit{$^{2}$Department of Management Science, Universit\'e du Qu\'ebec \`a Rimouski (UQAR), Levis, QC G6V 0A6, Qu\'ebec, Canada}}\\
{\small\textit{$^{3}$Natural Resources Canada, CanmetENERGY, 1615 Lionel-Boulet Blvd, P.O. Box 4800, Varennes, Qu\'ebec, J3X 1P7, Canada}} 
}
\begin{document}
\date{}
\maketitle

\begin{GDabstract}{Abstract}
We consider the problem of reoptimising log-truck routing and scheduling 
operations in real time following unforeseen disruptions in the Canadian 
forestry industry. Such disruptions, including road closures, vehicle 
breakdowns, travel delays, and demand fluctuations, invalidate pre-established 
tactical plans and require recovery decisions that simultaneously restore 
operational feasibility and limit deviation from the original schedule, two 
objectives that are inherently in tension under tight time constraints.

We formulate the recovery problem as a sequence of neighbourhood-restricted 
mixed-integer linear programs parameterised by a deviation bound $k$ measuring 
the $\ell_1$ distance between the recovered and the baseline tactical plan. We show that the minimum feasible value $k_{\min}$ can 
be computed exactly, anchoring the recovery search at its most stable extreme. 
Rather than targeting a single value of $k$, we seek a Pareto front of 
non-dominated recovery plans spanning the stability--cost space, so as to 
present dispatchers with a structured set of operational options. To navigate 
this space efficiently, we propose a reinforcement learning policy, trained 
via the REINFORCE policy-gradient algorithm, that selects successive values 
of $k$ adaptively based on solver feedback, concentrating computational effort 
in productive regions of the search space and terminating exploration when 
further improvement is unlikely.

We evaluate the proposed framework on weekly instances derived from 
historical operational data provided by a Canadian forestry partner, under disruption scenarios covering all event categories 
considered. The reinforcement learning-guided approach recovers richer Pareto 
fronts with fewer solver calls than a fixed-$k$ grid and dichotomic search baselines, and produces 
feasible recovery plans within operationally acceptable reoptimization times 
across all tested configurations and disruption types.

\paragraph{Keywords:}
Log-truck routing and scheduling; disruption management; local branching;
reinforcement learning; pareto front; mixed-integer programming;
real-time reoptimization; forestry logistics.
\end{GDabstract}

\section{Introduction}

The forestry industry plays a vital role in the Canadian economy, contributing
significantly to national GDP~\cite{NRCan2022}. In recent years, however, the 
sector has faced increasing challenges due to global economic fluctuations, 
the COVID-19 pandemic, trade tensions such as U.S. tariffs, and intensified 
international competition. In this context, transportation costs, which 
represent up to 40\% of total operating costs~\cite{audy2013virtual}, have 
become a critical lever for maintaining profitability. Ensuring efficient 
and sustainable transportation logistics is therefore central to the 
industry's competitiveness, particularly in Canada, where vast geographic 
distances, seasonal constraints, and fluctuating market demands create unique 
operational challenges~\cite{abdellaoui2025decomposition}. Among these 
logistics challenges, log transportation stands out as a key research area, 
directly influencing operational efficiency and cost performance.

Planning in this domain is typically divided into tactical planning, which
defines medium-term strategies such as weekly or monthly plans, and operational
planning, which focuses on the day-to-day execution of these strategies.
Tactical planning aims to optimize overall efficiency based on static parameters
such as the road network, required demand, and known product supply, and 
operational planning is, in theory, expected to follow the tactical plan 
closely. In practice, however, forestry operations are exposed to frequent 
and often unpredictable disruptions~\cite{audy2012}, including sudden road 
closures due to weather events, equipment breakdowns, and changes in forest 
accessibility~\cite{amrouss2017}. In the Canadian context, our industrial 
partners emphasised that disruption is the norm rather than the exception: 
virtually every operational week involves at least one event that invalidates 
part of the tactical plan, and the absence of recovery tooling is the single 
most frequently cited reason for the limited operational adoption of existing 
log-truck planning systems.

Recovering from such disruptions requires producing a new transportation 
plan that satisfies all existing operational constraints together with those 
imposed by the event, while controlling cost and remaining as close as 
possible to the baseline plan. Through extensive discussions with our 
industrial partner, a sharper requirement emerged: dispatchers do not want a 
single prescribed recovery plan, but a structured set of trade-off options 
spanning the stability--cost space, from which the most suitable plan can be 
selected depending on the severity of the disruption and on operational 
priorities. Recovery is therefore inherently a bi-objective problem, and to 
the best of our knowledge no prior work has addressed the reoptimization of 
log-truck routing and scheduling under unforeseen events from this perspective.

Methodologically, the recovery problem sits at the intersection of three 
streams that have evolved largely in isolation. Local 
branching~\cite{fischetti2003local} provides an effective matheuristic for 
repair-type mixed-integer programs through a Hamming-distance constraint 
parameterised by a neighbourhood radius $k$, and \cite{liu2022learning} 
showed that this radius can itself be controlled by a reinforcement learning 
(RL) agent. In a parallel direction, multi-objective reinforcement learning 
(MORL)~\cite{roijers2013,vanmoffaert2014} has matured into a recognised 
paradigm for approximating Pareto fronts, but assumes cheap policy 
evaluations rather than the solver-call-bounded regime of MILP-based 
recovery. No existing work bridges these streams; nor does the local-branching 
literature provide analytical structure on the radius itself, even though 
this radius governs the entire feasibility--cost trade-off in repair settings.

\medskip

The contributions of this paper are as follows.
\begin{enumerate}
    \item We extend RL-guided local branching to a multi-objective setting, 
    in which the agent navigates a Pareto front rather than improving a 
    single incumbent, with a state, reward, and termination signal redesigned 
    around Pareto dominance.
    
    \item We derive analytical lower and upper bounds on the minimum feasible 
    deviation $k_{\min}$ that restores feasibility after a disruption, 
    independent of the RL machinery and applicable to any repair-type local 
    branching problem.
    
    \item We integrate these components into a Pareto-recovery framework that 
    returns a structured menu of non-dominated recovery plans, anchored at 
    $k_{\min}$ and explored adaptively under a fixed solver-call budget.
    
    \item We provide, to the best of our knowledge, the first reoptimization 
    framework for the Log-Truck Routing and Scheduling Problem under 
    unforeseen events, validated on twenty weekly instances from a Canadian 
    forestry partner and confirmed by partner dispatchers as addressing a 
    long-standing gap in deployed planning tools.
\end{enumerate}

The remainder of this paper is organised as follows. Section~\ref{litreview}
reviews the relevant literature on local branching, RL-guided matheuristics,
multi-objective RL, and disruption recovery in vehicle routing. 
Section~\ref{probdesc} describes the problem and presents the MILP 
formulation of the $\mathcal{LTRSP}$. Section~\ref{sec:methodology} details 
the recovery framework, including the analytical bounds on $k_{\min}$ and 
the multi-objective RL agent that controls the local-branching radius. 
Section~\ref{sec:experiments} reports the computational results, and 
Section~\ref{sec:conclusion} concludes with directions for future research.

\section{Literature Review}
\label{litreview}

The reoptimization of vehicle routing problems (VRPs) has received 
significant attention in recent decades, particularly in dynamic contexts 
where uncertainties affect road networks, customer demands, or fleet 
availability. The comprehensive review by~\cite{pillac2013} synthesises 
the dynamic VRP literature and identifies the principal sources of 
uncertainty that motivate online and reactive solution approaches. Within 
this body of work, early contributions emphasised the need for systematic 
frameworks to handle stochastic and unforeseen events. \cite{secomandi2009} 
studied reoptimization policies for the VRP with stochastic demands, 
highlighting the importance of timely policy updates to mitigate 
disruptions. More recently, \cite{florio2022} addressed VRPs with 
stochastic demands and proposed partial reoptimization strategies that 
reduce computational effort while preserving solution quality. 
\cite{niknass2014} developed strategies for reoptimization in a dynamic 
VRP with mixed backhauls, underscoring the role of decision timing in 
maintaining feasible routes. \cite{linfati2018} proposed a heuristic 
reoptimization approach for the capacitated VRP, offering scalable 
solutions suitable for large instances, while \cite{abdallah2017} 
introduced periodic reoptimization mechanisms for dynamic VRPs, 
demonstrating the value of adaptive frameworks in complex and evolving 
networks. Together, these studies form the methodological backbone for 
handling reoptimization in dynamic routing contexts.

A more focused stream of the dynamic VRP literature has explicitly 
addressed disruption recovery, in which routes must be repaired in 
response to discrete events that invalidate the planned schedule. 
\cite{mu2011} formulated the disrupted VRP as a recovery problem and 
proposed two heuristics, tabu search and a Lagrangian relaxation–based 
method, to repair the planned routes after a vehicle break-down or a 
similar incident. \cite{eglese2018} reviewed disruption-related dynamic 
VRP variants and identified the trade-off between solution stability and 
cost as a central but under-explored design dimension across the recovery 
literature. These works frame disruption recovery as a stand-alone 
algorithmic problem rather than a special case of stochastic VRP, and 
motivate the bi-objective treatment adopted in the present paper.

In the forestry sector, the problem of reoptimization has been explicitly 
recognised as critical due to the high exposure to disruptions such as 
delays, equipment failures, and road network changes~\cite{palander2005}. 
Methods have been proposed to dynamically update log-truck routes when 
faced with unforeseen events such as delays or road closures. 
\cite{epstein2007} introduced a rolling horizon approach that incorporates 
real-time data streams to periodically reschedule log-truck operations, 
enabling near-optimal adjustments. \cite{andersson2008} presented RuttOpt, 
a decision support system for log-truck routing that integrates 
optimization with practical constraints and reports efficiency gains of 
up to 30\% in real applications. \cite{amrouss2017} presented a framework 
for the real-time management of transportation disruptions in forestry, 
examining a wide range of disruption scenarios including changes in 
demand, equipment breakdowns, and road network variability, and proposing 
optimization models to mitigate their impact. \cite{amrouss2016} focused 
specifically on the management of unforeseen events, showing how 
real-time data can be leveraged to support rapid reoptimization of forest 
transport operations. The review by~\cite{audy2023} provides a 
comprehensive survey of planning methods and decision support systems in 
timber transportation, emphasising the evolution of solution approaches 
and identifying persistent gaps in addressing dynamic disruptions.

A common limitation across all of these contributions is that each 
proposes a single recovery plan in response to a disruption, leaving 
dispatchers with no structured means to explore the trade-off between plan 
stability and cost efficiency. In practice, the severity of a disruption, 
the time remaining in the planning horizon, and local operational 
priorities all influence which recovery option is most appropriate. To the 
best of our knowledge, no existing work in forestry logistics formulates 
the reoptimization problem as a multi-objective problem and presents 
dispatchers with a Pareto front of non-dominated recovery alternatives. 
This gap directly motivates the design of our framework.

A second methodological stream relevant to this work is the use of local 
branching for multi-objective integer programming. \cite{soylu2015} 
proposed a multi-objective variable neighbourhood search built on the 
local branching idea, collecting Pareto-front segments during the search 
and combining them at termination. \cite{stidsen2014} extended local 
branching as a primal heuristic embedded inside a bi-objective 
branch-and-cut algorithm for binary multi-objective integer programs. 
These works establish that the local branching neighbourhood structure 
can be exploited in multi-objective settings, but the radius is either 
fixed or controlled by classical metaheuristic rules. The contribution of 
the present paper is orthogonal to these works: rather than proposing a 
new multi-objective local branching scheme per se, we use local branching 
as the matheuristic substrate over which a reinforcement learning agent 
adaptively controls the neighbourhood radius for Pareto-front exploration.

The emergence of data-driven approaches has introduced new perspectives 
for adaptive reoptimization. The survey by~\cite{bengio2021} provides a 
methodological tour of machine learning for combinatorial optimization, 
identifying the control of mathematical programming heuristics through 
learned policies as one of the most promising research directions. 
Concrete instantiations of this paradigm have grown rapidly. 
\cite{khalil2016} learned branching variable selection rules for 
mixed-integer programs through imitation learning, while \cite{tang2020} 
employed reinforcement learning to control cut-selection in cutting-plane 
algorithms. In the dynamic routing context, \cite{li2018} 
and~\cite{soeffker2022} have analysed how RL captures complex trade-offs 
across multiple objectives in repeated interaction with the environment. 
Most directly related to our approach, \cite{liu2022learning} demonstrated 
that a reinforcement learning agent can adaptively control the 
neighbourhood search radius in local branching for MILP problems, 
achieving significant reductions in computational effort while maintaining 
solution quality. Our framework builds on the architectural blueprint 
of~\cite{liu2022learning}, but extends it to a multi-objective setting in 
which the agent navigates the deviation--cost Pareto space rather than 
pursuing a single incumbent improvement.

In the context of multi-objective optimization, a growing body of work has 
studied the use of RL to approximate Pareto fronts, an area known as 
multi-objective reinforcement learning (MORL). \cite{roijers2013} provided 
a comprehensive survey of multi-objective sequential decision-making 
frameworks, establishing the theoretical foundations of the field. 
\cite{vanmoffaert2014} developed foundational multi-objective Q-learning 
methods capable of learning sets of Pareto-optimal policies, and showed 
that hypervolume-based reward signals effectively guide exploration across 
the objective space. \cite{xu2020} proposed prediction-guided MORL for 
continuous control tasks, using a learned model to anticipate trade-offs 
across objectives and concentrate exploration on productive regions of the 
Pareto front. \cite{basaklar2023} introduced preference-driven MORL 
algorithms that incorporate explicit preference models into the policy 
network, enabling agents to recover targeted portions of the Pareto front 
according to dispatcher priorities. At the algorithmic level, 
\cite{ropke2024} proposed a divide-and-conquer approach that provably 
converges to the full Pareto front by decomposing the multi-objective 
problem into a sequence of single-objective subproblems, each solved by a 
dedicated oracle, an approach that shares structural similarities with our 
local branching decomposition.

Despite these advances, the application of MORL to combinatorial 
optimization problems in transportation logistics remains largely 
unexplored. Existing MORL methods predominantly operate in continuous or 
small discrete state spaces and assume that each policy evaluation is 
computationally inexpensive. In our setting, by contrast, each evaluation 
of a candidate value of $k$ requires solving a large-scale MILP, making 
the budget of solver calls the primary computational resource to be 
managed. The RL agent in our framework must therefore learn not only which 
regions of the trade-off space are productive, but also when to terminate 
exploration, a problem structure that is absent from standard MORL 
benchmarks. The integration of mathematical programming, local branching, 
and reinforcement learning for forestry logistics remains largely unexplored, 
and to our knowledge this paper presents the first framework that 
addresses the stability--cost trade-off in $\mathcal{LTRSP}$ 
reoptimization through a Pareto-front approach guided by reinforcement 
learning.
\section{Problem Description}
\label{probdesc}
We dedicate this section to describing the $\mathcal{LTRSP}$ reoptimization 
task in response to unforeseen events in forestry operations. We begin by 
highlighting the key characteristics that significantly impact the routing 
and scheduling process. We then introduce the mathematical model of 
$\mathcal{LTRSP}$, which serves as the foundation for tactical planning, and 
finally describe the common unforeseen events that necessitate reoptimization 
in forestry operations.

\subsection{Mathematical Model of the $\mathcal{LTRSP}$}

The $\mathcal{LTRSP}$ represents the tactical planning layer of forestry
logistics. As introduced in~\cite{abdellaoui2025decomposition}, its objective
is to determine a weekly transportation plan that assigns a fleet of trucks
to feasible sequences of trips between forest harvesting sites and processing
mills. Each route consists of alternating empty and loaded trips, together
with loading and unloading operations that must be scheduled within site- and
mill-specific time windows. Compared to classical vehicle routing problems,
the $\mathcal{LTRSP}$ integrates several structural characteristics unique to
forestry operations. Multiple wood assortments and species must be
transported, while each mill imposes product-specific demand requirements
that must be satisfied within the planning horizon. Loader resources are
explicitly modeled, as they constitute critical operating equipment at both
forest sites and mills. The sparse road infrastructure and long travel
distances further restrict route feasibility. In addition, trucks typically
perform multiple trips per day, which requires synchronization across
consecutive trips and strict compliance with driver working-time regulations.

Table~\ref{tab1} presents the sets and parameters that define the model. The
formulation is stated on a space--time network in which each node
$n = (\ell, i)$ couples a physical location $\ell \in \mathcal{L}$ with a time
interval $i \in I$, so that an arc $a \in A$ simultaneously encodes a movement
between two locations and its temporal placement within the day. The decision
variables are ;
\[
\begin{aligned}
x_{avt} &= 1 \text{ if arc } a \text{ is traversed by vehicle } v
           \text{ on day } t, \text{ and } 0 \text{ otherwise,} \\
\delta_{mp} &= \text{unmet demand for product } p \text{ at mill } m.
\end{aligned}
\]

The model~(\ref{objfun})--(\ref{const14}) is denoted as $\mathcal{P}_m$ and
seeks to minimize the total system cost through Equation~\ref{objfun}, which
is composed of two elements: the cost of routing trucks along arcs of the
transportation network, and the penalty cost of unmet demand when mill
requirements cannot be fully satisfied. The optimization problem is subject
to a set of constraints that guarantee the logical and operational
feasibility of the solution:

\begin{itemize}
\item \textbf{Fleet constraints:} Equations~\ref{const1}--\ref{const2} ensure
that each truck departs from and returns to its designated home base at most
once per day.
\item \textbf{Flow conservation:} Equation~\ref{const3} maintains route
continuity at every visited space--time node.
\item \textbf{Demand satisfaction:} Equation~\ref{const4} enforces mill
demand for each product, while allowing unmet demand variables when full
satisfaction is infeasible.
\item \textbf{Supply limitations:} Equation~\ref{const5} ensures that the
harvested volume at each forest site does not exceed its available stock.
\item \textbf{Trip limits:} Equation~\ref{const6} restricts the number of
loaded trips performed by each vehicle on each day.
\item \textbf{Loader availability:} Equations~\ref{const7} and~\ref{const8}
restrict simultaneous loading and unloading operations at forest sites and
mills to the number of loaders available in each time interval.
\item \textbf{Time window constraints:} Equations~\ref{const9}
and~\ref{const10} enforce the opening and closing times of the location
reached by each arc, accounting for the service duration performed there.
\end{itemize}

\begin{table}[H]
\centering
\caption{Model sets and parameters}
\label{tab1}
\renewcommand{\arraystretch}{1.3}
\begin{tabular}{ll}
\toprule
\textbf{Symbol} & \textbf{Definition} \\
\midrule
$\mathcal{L}$ & set of locations representing mills, forest sites, and home bases \\
$F$ & set of forest sites \\
$M$ & set of mills \\
$V$ & set of trucks \\
$P$ & set of wood products \\
$I$ & set of time intervals \\
$N$ & set of space--time nodes $n=(\ell,i)$, with $\ell \in \mathcal{L}$ and $i \in I$ \\
$\mathcal{HB}_v$ & set of eligible home bases for truck $v$ \\
$T$ & set of days in the planning horizon \\
$A$ & set of arcs connecting pairs of nodes across time intervals \\
$A^{+}(n)$ & set of arcs leaving node $n$ \\
$A^{-}(n)$ & set of arcs entering node $n$ \\
$A_{f}^{mp}$ & set of loaded arcs from forest site $f$ to mill $m$ for product $p$ \\
$A_{fi}^{L}$ & loading arcs at forest site $f$ at time interval $i$ \\
$A_{mi}^{U}$ & unloading arcs at mill $m$ at time interval $i$ \\
$c_{a}^{h}$ & hauling cost for arc $a$ \\
$c_{a}^{w}$ & waiting cost for arc $a$ \\
$c_{mp}$ & penalty cost per unit of unmet demand for product $p$ at mill $m$ \\
$i_{a}^{+}$ & start time of the time interval associated with the head node of arc $a$ \\
$d_{mp}$ & weekly demand for product $p$ at mill $m$, expressed in GMT \\
$q_{v}$ & payload capacity of vehicle $v$, expressed in GMT \\
$K_{v}$ & maximum number of trips allowed for vehicle $v$ \\
$\beta_{v}$ & indicator equal to 1 if vehicle $v$ lacks self-loading capability \\
$s_{fp}$ & available supply of product $p$ at forest site $f$, expressed in GMT \\
$S_{a}$ & service duration on arc $a$, corresponding to loading time at forest sites and unloading time at mills \\
$\Lambda_{mi}$ & loader capacity at mill $m$ during time interval $i$ \\
$\Lambda_{fi}$ & loader capacity at forest site $f$ during time interval $i$ \\
$\tau^{o}_{a},\ \tau^{c}_{a}$ & operating window (opening and closing times) at the head location of arc $a$ \\
\textit{BigM} & sufficiently large constant \\
\bottomrule
\end{tabular}
\end{table}

The mathematical model is given as follows:

{\small
\begin{align}
\min \quad
& \sum_{t \in T}\sum_{v \in V}\sum_{a \in A} (c_{a}^{h} + c_{a}^{w})\, x_{avt}
+ \sum_{m \in M}\sum_{p \in P} c_{mp}\, \delta_{mp}
\label{objfun} \\[5pt]
\text{s.t.} \quad
& \sum_{a \in A^{+}(h)} x_{avt} \leq 1,
\quad \forall\, v \in V,\, h \in \mathcal{HB}_v,\, t \in T
\label{const1} \\[5pt]
& \sum_{a \in A^{-}(h)} x_{avt} = \sum_{a \in A^{+}(h)} x_{avt},
\quad \forall\, v \in V,\, h \in \mathcal{HB}_v,\, t \in T
\label{const2} \\[5pt]
& \sum_{a \in A^{+}(n)} x_{avt} = \sum_{a \in A^{-}(n)} x_{avt},
\quad \forall\, v \in V,\, n \in N,\, t \in T
\label{const3} \\[5pt]
& \sum_{t \in T}\sum_{f \in F}\sum_{v \in V}\sum_{a \in A_{f}^{mp}}
q_v\, x_{avt} + \delta_{mp} = d_{mp},
\quad \forall\, m \in M,\, p \in P
\label{const4} \\[5pt]
& \sum_{t \in T}\sum_{m \in M}\sum_{v \in V}\sum_{a \in A_{f}^{mp}}
q_v\, x_{avt} \leq s_{fp},
\quad \forall\, f \in F,\, p \in P
\label{const5} \\[5pt]
& \sum_{m \in M}\sum_{f \in F}\sum_{p \in P}
\sum_{a \in A_{f}^{mp}} x_{avt} \leq K_v,
\quad \forall\, v \in V,\, t \in T
\label{const6} \\[5pt]
& \sum_{v \in V}\sum_{a \in A_{fi}^{L}}
\beta_v\, x_{avt} \leq \Lambda_{fi},
\quad \forall\, i \in I,\, f \in F,\, t \in T
\label{const7} \\[5pt]
& \sum_{v \in V}\sum_{a \in A_{mi}^{U}}
\beta_v\, x_{avt} \leq \Lambda_{mi},
\quad \forall\, i \in I,\, m \in M,\, t \in T
\label{const8} \\[5pt]
& i_{a}^{+} - \textit{BigM}\,(1 - x_{avt}) + S_{a} \leq \tau^{c}_{a},
\quad \forall\, v \in V,\, a \in A,\, t \in T
\label{const9} \\[5pt]
& \tau^{o}_{a} \leq i_{a}^{+} + \textit{BigM}\,(1 - x_{avt}),
\quad \forall\, v \in V,\, a \in A,\, t \in T
\label{const10} \\[5pt]
& x_{avt} \in \{0, 1\},
\quad \forall\, v \in V,\, a \in A,\, t \in T
\label{const11} \\[5pt]
& \delta_{mp} \in \{0, \ldots, d_{mp}\},
\quad \forall\, m \in M,\, p \in P
\label{const14}
\end{align}
}

This tactical model generates a set of truck routes that specify both loaded and empty trips for each day, together with their timing and allocation of resources. The resulting output defines a transportation plan that ensures mill demand is satisfied, forest supply is not exceeded, and operational restrictions are respected.  

A transportation plan consists of a set of routes assigned to different trucks. Each route is composed of a sequence of empty and loaded trips. A truck typically starts from its home base, travels to a forest block to load timber, and then proceeds to a mill to unload the requested products, before continuing with subsequent trips as required. Figure~\ref{fig:truck-route} illustrates an example of a daily truck route.  

Importantly, the transportation plan specifies not only the sequence of visits for each truck but also the expected arrival time at every stop. Figure~\ref{time-route2} provides an example of a truck route where both the sequence of trips and the corresponding arrival times at each location are explicitly represented. 

However, this model assumes deterministic parameters and a disruption-free environment. In reality, unforeseen events frequently occur, making it necessary to embed this tactical model within a broader framework that allows for real-time reoptimization when deviations from the plan arise. Subsequent section presents unforeseen events in forestry.

\begin{figure}[H]
\centering
\begin{tikzpicture}[>=stealth, thick]

    \node (b1) at (0,0) {\includegraphics[width=1.2cm]{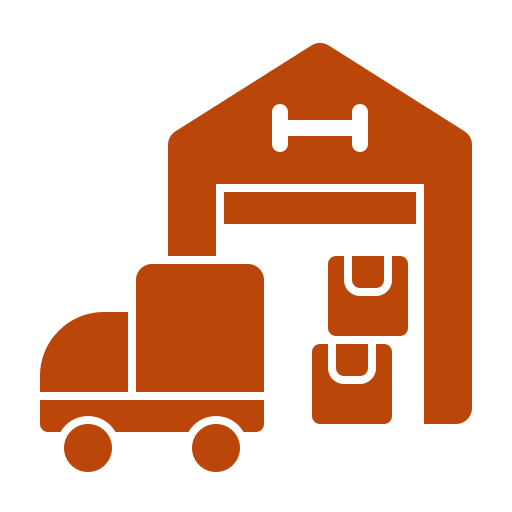}};
    \node (f1) at (3,1.8) {\includegraphics[width=1.2cm]{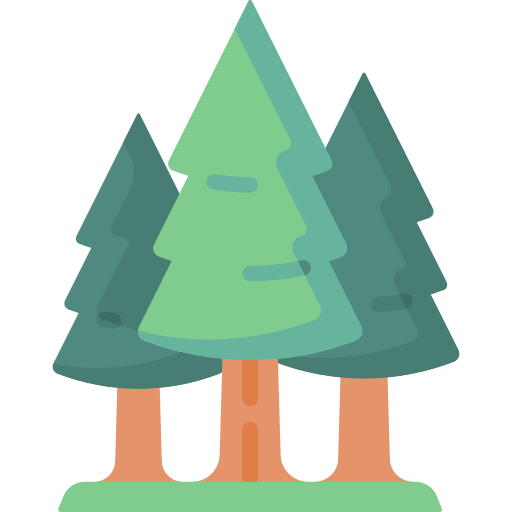}};
    \node (p1) at (6,1.8) {\includegraphics[width=1.2cm]{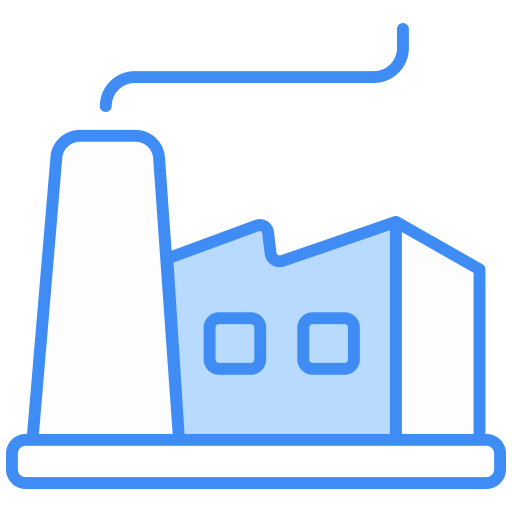}};
    \node (f2) at (3,-0.8) {\includegraphics[width=1.2cm]{trees.png}};
    \node (f3) at (6,-0.8) {\includegraphics[width=1.2cm]{trees.png}};
    \node (p2) at (3,-3) {\includegraphics[width=1.2cm]{power.png}};

    \node[above=0.1cm of f1] {$F_1$};
    \node[right=0.1cm of f2] {$F_2$};
    \node[right=0.1cm of f3] {$F_3$};

    \node[above=0.1cm of p1] {$M_1$};
    \node[below=0.1cm of p2] {$M_2$};
     \node[below=0.05cm of b1] {$HB_1$};
    \draw[->] (b1) -- node[above left] {\scriptsize 1} (f1);
    \draw[->] (f1) -- node[above] {\scriptsize 2} (p1);
    \draw[->, bend right=20] (p1) -- node[right] {\scriptsize 3} (f2);
    \draw[->, bend left=25] (f2) -- node[left] {\scriptsize 4} (p1);
    \draw[->] (p1) -- node[right] {\scriptsize 5} (f3);
    \draw[->] (f3) -- node[below right] {\scriptsize 6} (p2);
    \draw[->] (p2) -- node[below left] {\scriptsize 7} (b1);

\end{tikzpicture}
\caption{Daily truck route : the truck departs from its homebase ($HB1$), visits forest sites ($F_1,F_2,F_3$), delivers raw materials to mills ($M_1,M_2$), and finally returns to homebase. This route is represented by the sequence of trips : $HB_1 \longrightarrow F_1 \longrightarrow M_1 \longrightarrow F_2 \longrightarrow M_1 \longrightarrow F_3 \longrightarrow M_2 \longrightarrow HB_1.$}
\label{fig:truck-route}
\end{figure}

\begin{figure}[H]

\centering
\begin{tikzpicture}[>=stealth, thick, scale=1.0]

\def\step{0.5}
\def\nsteps{22} 

\draw[line width=8pt, orange!70] (0,0) -- (\nsteps*\step,0);
\node[left] at (0,0) { $HB_1$};
\draw[red, thick, fill=red!50] (2*\step,-0.25) rectangle (2.2*\step,0.25);
\draw[red, thick, fill=red!50] (20.5*\step,-0.25) rectangle (20.7*\step,0.25);

\draw[line width=8pt, green!70] (0,-2) -- (\nsteps*\step,-2);
\node[left] at (0,-2) {$F_2$};
\foreach \i in {0,...,\nsteps} {
  \draw (\i*\step,-1.75) -- (\i*\step,-2.25);
}
\draw[red, thick, fill=red!50] (4*\step,-2.25) rectangle (5*\step,-1.75);
\node[above] at (4.5*\step,-2.25) {\scriptsize $i_5$};

\node[above, blue, font=\scriptsize] at (20*\step,-1.75) {};

\draw[line width=8pt, violet!70] (0,-4) -- (\nsteps*\step,-4);
\node[left] at (0,-4) { $M_2$};
\foreach \i in {0,...,\nsteps} {
  \draw (\i*\step,-3.75) -- (\i*\step,-4.25);
}
\draw[red, thick, fill=red!50] (6*\step,-4.25) rectangle (7*\step,-3.75);
\node[above] at (6.5*\step,-4.25) {\scriptsize $i_{7}$};

\draw[red, thick, fill=red!50] (11*\step,-4.25) rectangle (12*\step,-3.75);
\node[above] at (11.5*\step,-4.25) {\scriptsize $I_{12}$};

\draw[red, thick, fill=red!50] (16*\step,-4.25) rectangle (17*\step,-3.75);
\node[above] at (16.5*\step,-4.25) {\scriptsize $i_{16}$};

\draw[line width=8pt, green!50!black!50] (0,-6) -- (\nsteps*\step,-6);
\node[left] at (0,-6) { $F_3$};
\foreach \i in {0,...,\nsteps} {
  \draw (\i*\step,-5.75) -- (\i*\step,-6.25);
}
\draw[red, thick, fill=red!50] (8*\step,-6.25) rectangle (9*\step,-5.75);
\node[above] at (8.5*\step,-6.25) {\scriptsize $i_{9}$};

\draw[red, thick, fill=red!50] (13*\step,-6.25) rectangle (14*\step,-5.75);
\node[above] at (13.5*\step,-6.25) {\scriptsize $i_{14}$};

\draw[->] (0,-8) -- (\nsteps*\step + 0.3,-8) node[right]{\textbf{Time}};
\foreach \i/\lbl in {
  0/4{:}00,  1/4{:}45,  2/5{:}30,  3/6{:}15,
  4/7{:}00,  5/7{:}45,  6/8{:}30,  7/9{:}15,
  8/10{:}00, 9/10{:}45, 10/11{:}30, 11/12{:}15,
  12/13{:}00, 13/13{:}45, 14/14{:}30, 15/15{:}15,
  16/16{:}00, 17/16{:}45, 18/17{:}30, 19/18{:}15,
  20/19{:}00, 21/19{:}45, 22/20{:}30} {
  \draw (\i*\step,-7.9) -- (\i*\step,-8.1)
    node[below, font=\scriptsize, rotate=45, anchor=north east]{\lbl};
}


\draw[->, thick, blue]
  (2.2*\step,0) .. controls +(0,-1) and +(0,1) .. (4*\step,-2)
  node[midway,left,blue]{\scriptsize $T_{b_1,f_2}$};
\node at (4 .5*\step,-0.9) {\includegraphics[width=1cm]{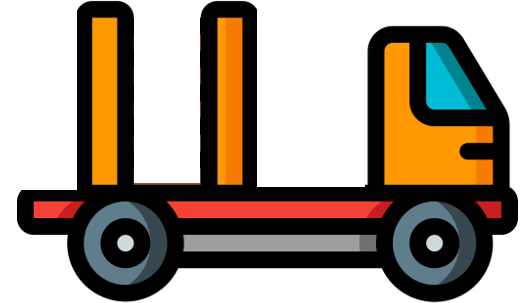}};

\draw[->, thick, blue]
  (5*\step,-2) .. controls +(0,-1) and +(0,1) .. (6*\step,-4)
  node[midway,left,blue]{\scriptsize $T_{f_2,m_2}$};
\node at (7.0*\step,-2.9) {\includegraphics[width=1cm]{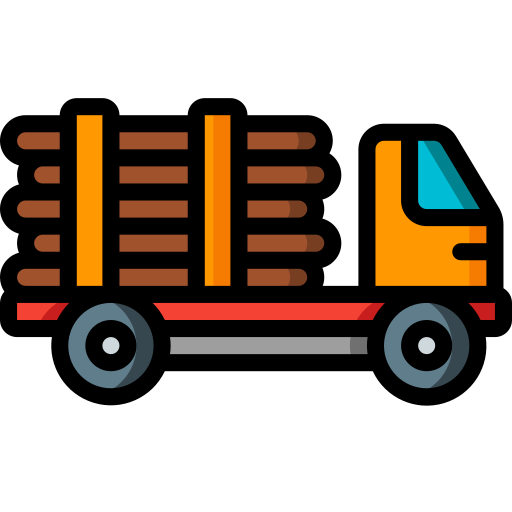}};

\draw[->, thick, blue]
  (7*\step,-4) .. controls +(0,-1) and +(0,1) .. (8*\step,-6)
  node[midway,left,blue]{\scriptsize $T_{m_2,f_3}$};
\node at (4.5*\step,-4.9) {\includegraphics[width=1cm]{truck_empty.png}};

\draw[->, thick, blue]
  (9*\step,-6) .. controls +(0,1) and +(0,-1) .. (11*\step,-4)
  node[midway,left,blue]{\scriptsize $T_{f_3,m_2}$};
\node at (10.6*\step,-5.4) {\includegraphics[width=1cm]{truck_full.png}};

\draw[->, thick, blue]
  (12*\step,-4) .. controls +(0,-1) and +(0,1) .. (13*\step,-6)
  node[midway,right,left]{\scriptsize $T_{m_2,f_3}$};
\node at (13.8*\step,-4.9) {\includegraphics[width=1cm]{truck_empty.png}};

\draw[->, thick, blue]
  (14*\step,-6) .. controls +(0,1) and +(0,-1) .. (16*\step,-4)
  node[midway,right,blue]{\scriptsize $T_{f_3,m_2}$};
\node at (18*\step,-4.9) {\includegraphics[width=1cm]{truck_full.png}};

\draw[->, thick, blue]
  (17*\step,-4) .. controls +(0,1) and +(0,-1) .. (20.5*\step,0)
  node[midway,right,blue]{\scriptsize $T_{m_2,b_1}$};
\node at (19.5*\step,-2.9) {\includegraphics[width=1cm]{truck_empty.png}};

\end{tikzpicture}
\caption{Time-space representation of a daily truck route in the
$\mathcal{LTRSP}$. The truck departs from home base $HB_1$, visits
forest blocks $F_2$ and $F_3$, delivers timber to mill $M_2$ across
three trips, and returns to $HB_1$. Red blocks indicate loading and
unloading operations; arrows represent travel segments.}
\label{time-route2}
\end{figure}

\subsection{Unforeseen Events in Forestry Logistics}

The Canadian forestry sector faces numerous operational challenges arising
from the variability of field conditions and the complexity of managing
transportation across vast geographic areas. Unforeseen events such as road
closures, vehicle breakdowns, extreme weather, and sudden demand changes can
significantly disrupt pre-established transportation plans, making it essential
to design mechanisms for rapid and effective recovery.

Unforeseen events in forestry transportation fall into three broad categories:
\begin{itemize}
    \item \textbf{Network-related disruptions} alter the feasible arc set
    $A$ through road closures caused
    by weather events, accidents, or maintenance work. Unlike urban logistics,
    forestry operations frequently rely on single-access roads, which severely
    limits rerouting flexibility.

    \item \textbf{Supply chain disruptions} modify demand $d_{mp}$ at mills
    $m \in M$ or supply $s_{fp}$ at forest sites $f \in F$, and tend to
    propagate rapidly across multiple trips, affecting the feasibility of
    pre-planned delivery sequences.

    \item \textbf{Operational disruptions} reduce the availability of
    resources such as trucks $v \in \mathcal{V}$ or loader capacities
    $\Lambda_i$ at sites $i \in F \cup M$, often creating bottlenecks
    and leaving equipment idle.
\end{itemize}

Let $\mathcal{E}$ denote the set of all possible unforeseen events. An exact
characterisation of each event $e \in \mathcal{E}$ is the natural starting
point for updating the routing and scheduling plan. Each event is described
by two complementary components. Its \textit{intrinsic attributes} identify
what happened: the event type $t_e$ (road closure, breakdown, demand
variation, etc.), its location $l_e$ in the network, its occurrence time
$\tau_e$, and its duration $d_e \in \mathbb{R}_+$. Its \textit{induced
impact} captures how it modifies the model: the set $\Delta A_e$ encodes
arc removals or additions in the routing network, while $\Delta \theta_e$
encodes changes to problem parameters such as supply $s_{fp}$, demand
$d_{mp}$, or resource capacities $\Lambda_i$. Formally, each event
$e \in \mathcal{E}$ is represented as the tuple
\begin{equation}
    e \;\mapsto\;
    \bigl(\,t_e,\; l_e,\; \tau_e,\; d_e,\; \Delta A_e,\;
    \Delta \theta_e\,\bigr)
    \label{eq:event_tuple}
\end{equation}
which provides the necessary information to transform the feasible region
$\mathcal{F}$ of the optimization model $\mathcal{LTRSP}$ into its disrupted counterpart
$\mathcal{F}(e)$, thereby enabling 
reoptimization under the new operational
conditions.
Table~\ref{tab:disruptions} details the selected subset of events from the three categories above, chosen because of their high frequency in practice, as reported by our industrial partner.

\begin{sidewaystable}
\centering
\caption{Detailed characterization of unforeseen events: description, required information, and technical impact on the model.}
\begin{tabular}{|p{3cm}|p{5cm}|p{5cm}|p{6cm}|}
\hline
\textbf{Event type} & \textbf{Description} & \textbf{Needed information} & \textbf{Technical impact (on network and data)} \\
\hline
 
\textbf{Road / site closures} 
& Temporary shutdown of forest sites, mills, or access roads. Since most forest roads are single-access, their closure severely limits rerouting possibilities. 
& Occurrence time $\tau$, exact location of the closure, duration $d_e$, type of site affected (mill/forest), and list of trips completed prior to the event. 
& Eliminate all incoming and outgoing arcs of the closed node or road during $t \in [\tau,\tau+d_e]$, i.e., $A \gets A \setminus A_e$. Waiting arcs are preserved. This reduces feasible routing options and may render the model infeasible. \\
\hline

\textbf{Delays} 
& Travel slowdowns caused by adverse weather, road deterioration, traffic congestion, seasonal hunting/fishing restrictions, or vehicle malfunctions.  
& Occurrence time $\tau$, delay duration $d_e$, affected arcs $(i,j)\in A$, truck identifier if specific, and remaining distance.  
& Update arc travel times: $\tau_{ij} \gets \tau_{ij} + \Delta \tau_{ij}$. Adjust duration constraints and propagate shifts in the time--space network. For vehicle-specific delays, reconnect its start vertex at the delayed time. If $d_e$ exceeds the planning horizon, remove the truck’s arcs. \\
\hline

\textbf{Truck breakdowns} 
& A vehicle is out of service during repair. Equivalent to a delay if the repair time $d_e$ is within the horizon.  
& Occurrence time $\tau$, identifier of the affected truck $v \in \mathcal{V}$, repair duration $d_e$, and its current location.  
& Remove truck $v$ from the available fleet: $\mathcal{V} \gets \mathcal{V}\setminus\{v\}$ for $t \in [\tau,\tau+d_e]$. If the repair ends within the horizon, reconnect its source vertex at the repair completion time. \\
\hline

\textbf{Demand / supply variations} 
& Changes in mill demand or forest supply, e.g., mill capacity reduction due to breakdowns or updated demand forecasts.  
& Occurrence time $\tau$, location of the disruption (mill $m$ or forest $f$), product $p$, variation $\Delta d_{mp}$ or $\Delta s_{fp}$, and executed trips.  
& Modify parameters $d_{mp}$ and/or $s_{fp}$. Update balance constraints: 
\[
\sum_{a \in \delta^-(m)} x_{ap} - \sum_{a \in \delta^+(m)} x_{ap} = d_{mp}, \quad \forall m,p.
\]  
Enable or remove arcs between mills and sites if new flows become feasible. \\
\hline

\textbf{Loader breakdowns} 
& Reduced or unavailable loading/unloading capacity at mills or forest sites. A full breakdown is equivalent to a site closure.  
& Occurrence time $\tau$, affected site $i \in F \cup M$, repair duration $d_e$, number of loaders before and after breakdown.  
& Decrease loader capacity: $\Lambda_i \gets \Lambda_i - \Delta \Lambda_i$. Update service constraints: 
\[
\sum_{v \in \mathcal{V}} \sum_{a \in \delta^-(i)} x_{avt} \leq \Lambda_i, \quad \forall i,t.
\]  
If $\Lambda_i = 0$, remove all arcs entering or leaving $i$ for $t \in [\tau,\tau+d_e]$. \\
\hline

\end{tabular}
\label{tab:disruptions}
\end{sidewaystable}
When an event $e \in \mathcal{E}$ occurs at time $\tau_e$, the current transportation plan may be affected through delays, trip cancellations, or parameter variations. At the occurrence, we assume that real-time information becomes available regarding the nature of the disruption and the current state of operations. This includes the location and status of every truck (loaded/empty, at mill, forest site, or in transit), the set of completed trips, and the set of remaining trips. Such information is used to update the time–space network and reoptimize the transportation plan in response to $e$. Formally, the updated model $\mathcal{P}_m(e)$ fixes decision variables associated with already executed trips, removes infeasible arcs, and reschedules the remaining activities under the new feasibility domain $\mathcal{F}(e)$.  
\section{Methodology}
\label{sec:methodology}
In this section, we present a unified methodology for the reoptimization of log-truck routes and schedules in response to unforeseen events. 
The overall framework, illustrated in Figure~\ref{reoptimization-arche}, integrates five interrelated components that operate in a continuous feedback loop to ensure adaptive and resilient decision-making. The proposed framework, illustrated in Figure~\ref{reoptimization-arche}, is 
organized around a dynamic memory that decouples two operational layers. The 
upstream layer maintains the current plan and operational state and runs 
continuously; the downstream layer activates only when a disruption is detected, 
reads the latest plan and system state from the memory, and writes back a 
recovered plan. This separation lets the framework absorb successive events 
asynchronously without interrupting nominal monitoring.

The upstream layer comprises the Optimization Engine, which produces the initial 
tactical plan $S_0$ following \cite{abdellaoui2025decomposition}, and Real-Time 
Monitoring, which tracks plan execution and emits the system state $S_t$ from 
standard fleet-management infrastructure. Neither is a contribution of this paper.

For the Unforeseen Events component shown in Figure~\ref{reoptimization-arche}, 
in a deployment setting disruptions would be communicated through monitoring 
systems or human interactions such as calls and emails from drivers, dispatchers, 
or mill operators. In this study, we instead rely on an Unforeseen Events Generator 
that supplies the disruption scenarios used to evaluate the framework. We start 
from a list of real unforeseen events collected through intensive discussions with 
our forestry partner and enrich it with synthetically generated events that share 
the same characteristics (type, frequency, duration, location) to produce a 
broader set of test scenarios.

The Disruption Characterization and Impact Module  maps 
each event $e$ to a structured pair $\varphi(e) = \big(\Delta\theta(e), 
\Delta\mathcal{A}(e)\big)$ of parameter and arc-set updates. The Reoptimization 
Engine then computes a recovered plan $S_{t+1}$ 
via a local-branching MILP whose neighbourhood radius is adaptively controlled by 
a reinforcement learning agent, returning a Pareto front of stability--cost 
trade-offs to the dispatcher. These three components, namely the Unforeseen Events 
Generator, the DCIM, and the Reoptimization Engine, constitute the methodological 
core of this paper and are detailed in the subsequent subsections.

\begin{figure}[H]
\centering
\includegraphics[width=0.9\textwidth]{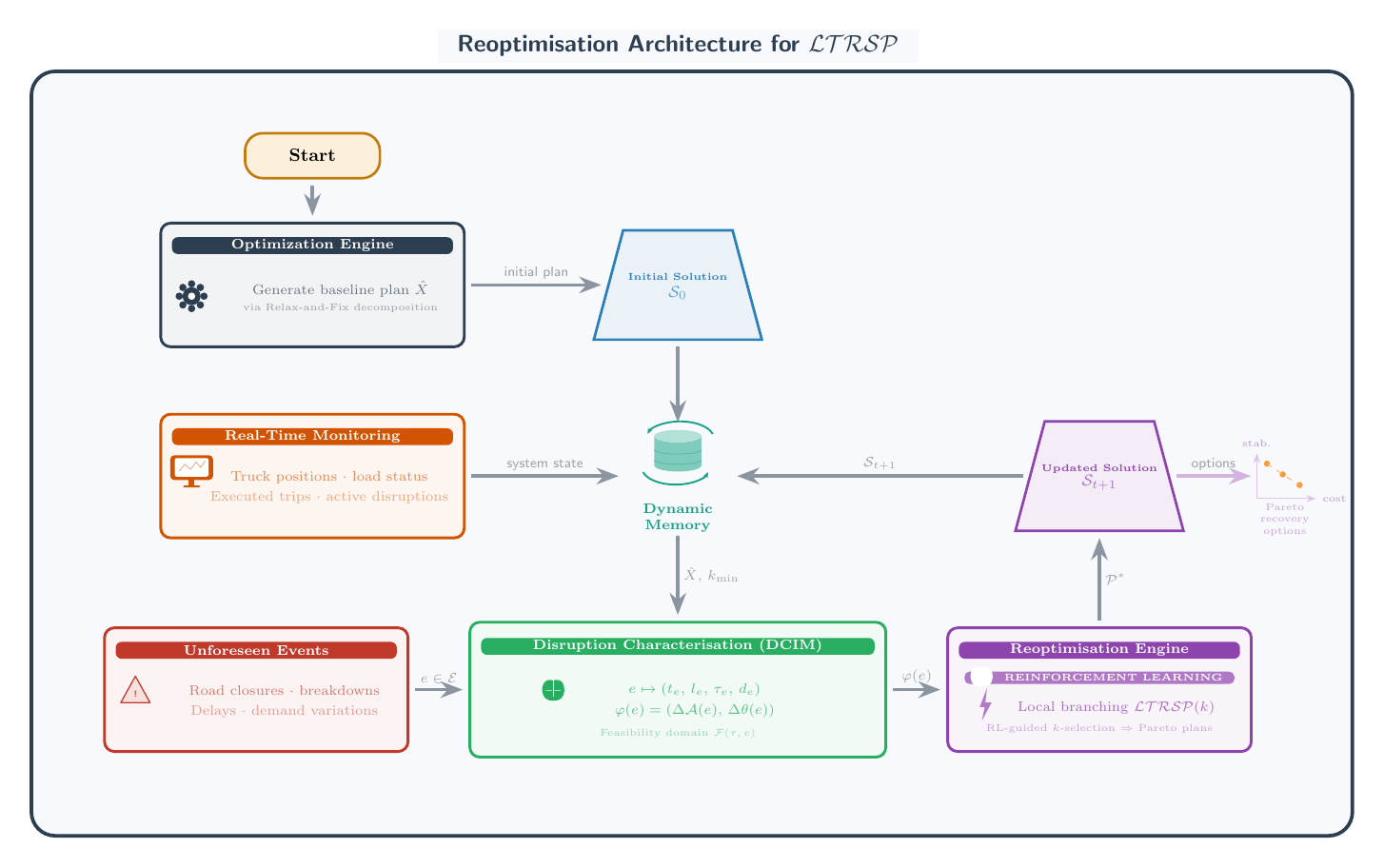}
\caption{Continuous reoptimization architecture for the $\mathcal{LTRSP}$. The initial plan $\mathcal{S}_0$ is stored in the dynamic memory, which is continuously updated by real-time. When unforeseen events appear, they are detected and characterized by the DCIM, which uses the current system state and event information to update the model. The updated model is reoptimized to produce a new plan $\mathcal{S}_{t+1}$, which is then stored back in the dynamic memory, ensuring continuous adaptation.}
\label{reoptimization-arche}
\end{figure}

The main contribution of this paper lies in the design and integration of the  third, fourth and fifth components of the proposed framework, 
namely  \emph{Unforeseen Events Generator}, \emph{Disruption Characterization and Impact Module (DCIM)}, and the \emph{Reoptimization Engine}. 
As previously mentioned, the first component follows the deterministic formulation described in \cite{abdellaoui2025decomposition}, 
while the second component, generally relies on existing monitoring infrastructures such as sensors, 
GPS tracking systems, and fleet management software to collect and transmit real-time operational data. 

The subsequent subsections therefore focus on providing a detailed description of these components, 
as well as their mathematical integration within the overall reoptimization architecture. 

\subsection{Unforeseen Events Generator}
\label{sec:event-generator}

The first methodological component is an Unforeseen Events Generator, 
responsible for producing disruption scenarios to be applied to problem 
instances. This module is inspired by stochastic simulation approaches in 
the resilience-testing literature, tailored to the context of the 
$\mathcal{LTRSP}$ in collaboration with our forestry partner.

Formally, let $\mathcal{E}$ denote the set of possible unforeseen events, 
and let $\Omega$ be the set of scenarios. Each scenario $\omega \in \Omega$ 
corresponds to a subset $\mathcal{E}_\omega \subseteq \mathcal{E}$ of events 
occurring at specific times and locations during the planning horizon. The 
generator stochastically draws $\mathcal{E}_\omega$ from predefined 
distributions over event type, frequency, location, and duration.

We do not claim that the distributional parameters in 
Table~\ref{tab:event_distributions} are statistically estimated from 
historical incident records. Rather, they constitute a parameterised 
stress-test specification informed by extensive discussions with our 
industrial partner regarding the event categories, typical durations, and 
approximate frequencies observed in operations. The purpose of the 
generator is to systematically span the relevant range of disruption 
severities, rather than to reproduce the empirical disruption distribution 
of any specific operation. This framing is consistent with the 
resilience-testing literature, where stochastic generators are used to 
stress-test recovery procedures across structurally diverse scenarios 
rather than to forecast disruption occurrence~\citep{wang2023resilience}.

The output of the generator is a collection of scenarios
\[
\Omega = \{\mathcal{E}_1, \mathcal{E}_2, \dots, \mathcal{E}_{|\Omega|}\}, 
\qquad
\mathcal{E}_\omega = \{e_1, e_2, \ldots, e_{n_\omega}\},
\qquad
e_i = (t_i, \; l_i, \; \tau_i,\; d_i),
\]
where each $\mathcal{E}_\omega$ contains the detailed event specifications 
later translated into model modifications. This construction ensures that 
the reoptimization framework is tested across a broad set of disruption 
configurations rather than on a limited number of ad-hoc cases. The 
synthetic disruptions are generated according to the rules summarised in 
Table~\ref{tab:event_distributions}.
\subsection{Disruption characterization and impact module (DCIM)}

The second methodological block is the \emph{Disruption Characterization and Impact Module (DCIM)}, which takes the raw events produced by the generator and translates them into explicit modifications of the mathematical model. The DCIM has two complementary functions:  

\begin{enumerate}
    \item \textbf{Event characterization:} Each event $e \in \mathcal{E}$ is described by its intrinsic attributes, such as type, location, occurrence time, and duration. Formally,
    \[
    e \mapsto \big(t_e, \; l_e, \tau_e,\; d_e \big).
    \]
    \item \textbf{Impact mapping:} Each event is then mapped to a set of technical impacts, namely (i) modifications to the routing network $A$ and (ii) adjustments to the problem parameters $\theta$ (e.g., demand, supply, resource capacities). This mapping can be expressed as
    \[
    \varphi: e \;\longmapsto\; \big(\Delta A(e), \; \Delta \theta(e)\big),
    \]
    where $\Delta A(e)$ represents arc additions/removals in the time–space network and $\Delta \theta(e)$ represents parameter updates.
\end{enumerate}. The rules of this mapping are event-specific, as detailed in Table~\ref{tab:disruptions}. For example, a road closure event produces $\Delta A(e) = A \setminus A_e$ (removal of arcs), while leaving $\theta$ unchanged. Conversely, a demand variation modifies $d_{mp}$ in $\theta$ but leaves $A$ unchanged. More complex events, such as truck breakdowns, induce simultaneous changes in both sets. The DCIM therefore serves as the formal bridge between disruption scenarios and their integration into the reoptimization model. It ensures that each scenario $\omega$ generated by the first module is consistently and rigorously  operationalized in terms of network feasibility and parameter constraints, thereby enabling systematic reoptimization of the $\mathcal{LTRSP}$.  
\begin{table}[H]
\centering
\caption{Stochastic generation rules for unforeseen events in the $\mathcal{LTRSP}$.}
\resizebox{\textwidth}{!}{
\begin{tabular}{lllll}
\hline
\textbf{Event type} & \textbf{Distribution law} & \textbf{Parameters} & \textbf{Affected element} & \textbf{Model impact} \\ \hline
Road closure & Bernoulli($p=0.08$), duration $U[1,3]$ days & Independent by forest road & Arc $(i,j) \in A$ & $\Delta A(e)$: arc removal for duration \\ 
Truck breakdown & Poisson($\lambda=0.05$/day), repair $U[4,12]$ h & Independent per vehicle $v \in \mathcal{V}$ & Truck $v$ & $\Delta \theta(e)$: downtime and availability shift \\ 
Travel delay & Exponential($\lambda=0.15$) & Arc-dependent mean delay $\mu_{ij}$ & Arc $(i,j)$ & $\Delta \theta(e)$: travel time increase \\ 
Demand variation & Normal($\mu=d_{mp}$, $\sigma=0.1d_{mp}$) & Mill–product pair $(m,p)$ & Demand parameter $d_{mp}$ & $\Delta \theta(e)$: right-hand side change \\ 
\hline
\end{tabular}
}
\label{tab:event_distributions}
\end{table}
\subsection{Reoptimization Engine}
\label{sec:plb-method}

The reoptimization engine is responsible for generating a new routing and
scheduling plan that explicitly accounts for the operational impacts of
unforeseen events, represented by the perturbation sets
$(\Delta\mathcal{A}, \Delta\theta)$. Its primary objective is to minimize
the total routing and scheduling cost while restoring feasibility and
maintaining a high degree of consistency with the original tactical plan.
In operational practice, drastic modifications to the baseline schedule are
undesirable, as they may disrupt the commitments and working patterns of
truck drivers, contractors, and mill operators, who organize their
activities around the initially established plan. Consequently, the
reoptimization process must simultaneously ensure two properties: cost
efficiency by minimizing the overall operating cost, and stability by
preserving as much as possible the structure of the pre-disruption plan.

Through extensive discussions with forest planners and dispatchers from our
industrial partner, a clear operational requirement emerged regarding how
these two objectives should be handled in practice. When a disruption
occurs, planners do not want a single prescribed recovery plan imposed by
the optimization system. Rather, they need a structured set of recovery
options, each representing a different compromise between plan stability
and cost efficiency, so that they can select the most appropriate response
depending on the severity of the disruption, the time remaining in the
planning horizon, and their knowledge of field conditions. A highly stable
plan that closely mirrors the original schedule may be preferred when the
disruption is minor or when coordination costs are high, while a more
aggressive restructuring may be acceptable when the potential cost savings
are substantial. This insight fundamentally shapes the design of our
reoptimization engine: rather than returning a single solution, the
framework constructs a Pareto front of non-dominated recovery plans
spanning the stability-cost space and presents it to the dispatcher as an
explicit menu of options.

Let $e$ be an unforeseen event occurring at time $t_e$. After detection,
its operational impact is translated through the DCIM, generating
perturbations $(\Delta\mathcal{A}, \Delta\theta)$ on the arc set and model
parameters. This induces a new feasibility domain, denoted by
$\mathcal{F}(e)$ for the $\mathcal{LTRSP}$ model $\mathcal{P}_m$. The
reoptimization problem then consists of solving $\mathcal{P}_m$ over the
updated feasibility domain $\mathcal{F}(e)$:
\[
\min \; z(X) \quad \text{s.t. } X \in \mathcal{F}(e).
\]
Moreover, all operational decisions realized before the event occurrence
time $t_e$ are fixed. Let $X^{*}$ denote the original tactical solution,
with components $x^{*}_{atv}$. We impose the fixing constraints:
\[
x_{atv} = x^{*}_{atv}, \quad
\forall (a,t,v) \text{ such that } t \leq t_e.
\]
Thus, reoptimization only affects decisions corresponding to operations
scheduled after $t_e$, while the pre-disruption portion of the plan
remains unchanged. The updated model $\mathcal{P}_m^{\mathrm{reop}}$ can
be solved directly to reoptimize the $\mathcal{LTRSP}$ under event $e$.
However, solving it without additional restrictions may yield a solution
that differs substantially from the original solution $X^{*}$. To ensure
stability, we instead search for a new solution that satisfies all
constraints of $\mathcal{P}_m^{\mathrm{reop}}$ while remaining within a
controlled neighbourhood of $X^{*}$. To this end, a local branching
approach is employed. The next subsection introduces the technical details.

\subsubsection{Local Branching}

To balance stability and solution quality, the reoptimization procedure
solves $\mathcal{P}_m^{\mathrm{reop}}$ under a local branching framework.
Intuitively, when the search is restricted to a narrow neighbourhood
around the original plan, stability is high and computational time is
limited. Expanding the neighborhood allows greater flexibility, which may improve solution quality, but at the cost of increased runtime and reduced stability. 

Accordingly, the proposed approach introduces a neighborhood size parameter that controls this trade-off between stability and solution quality, following the local branching scheme of \cite{fischetti2003local}. The deviation from the pre-disruption tactical plan is incorporated directly into the $\mathcal{P}_m^{reop}$  through a constraint that measures the Hamming distance between the original and reoptimized solutions, as illustrated in Figure~\ref{fig:polyhedron_local_branching}. The resulting $\mathcal{P}_m^{reop}$ with the local branching constraint can be formulated as follows:
\begin{figure}[H]
\centering
\includegraphics[width=0.4\textwidth]{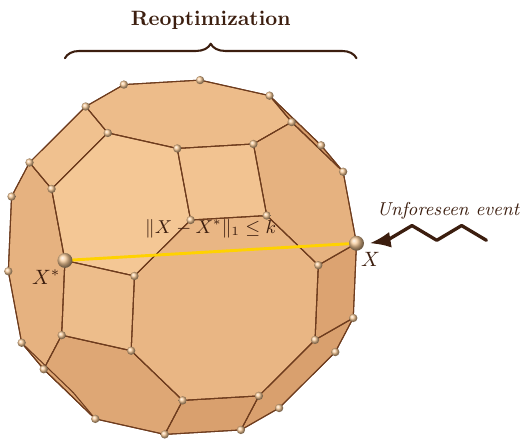}
\caption{\centering Illustration of the feasible polyhedral region $\mathcal{P}_m^{reop}$. The distance between $X^*$ and $X$ represents the local branching deviation bounded by $\|X - X^*\|_1 \le k$.}
\label{fig:polyhedron_local_branching}
\end{figure}
Using binary variables \(x_{atv}\) of $\mathcal{P}_m^{reop}$ indicating whether truck \(v \in \mathcal{V}\) uses arc \(a \in A\) at time \(t \in \mathcal{T}\), 
and \(x_{atv}^{*}\) its value in the pre-disruption tactical plan $X^{*}$. 
The deviation from the baseline configuration is captured by auxiliary binary variables \(y_{atv}\) defined as:
\[
y_{atv} \ge x_{atv} - x_{atv}^{*}, 
\qquad
y_{atv} \ge x_{atv}^{*} - x_{atv},
\]
so that \(y_{atv} = |x_{atv} - x_{atv}^{*}|\) at optimality.  
The reoptimization problem can then be expressed as:
\begin{equation}
\label{eq:PLB}
\min_{X \in \mathcal{F}(e)} 
\sum_{t \in \mathcal{T}}\sum_{v \in \mathcal{V}}\sum_{a \in A} c_a x_{atv}
\end{equation}
\begin{equation}
\label{eq:PLBconstr} \sum_{(a,t,v)} y_{atv} \leq k
\end{equation}
such that  $\mathcal{F}(e)$ is the new feasibilty domain under the new disruptions events and $k$  is the neighborhood search size. 
The structure of $\mathcal{P}_m^{\mathrm{reop}}$ requires the existence of a
minimal deviation value $k_{\min}$ that guarantees model feasibility after a
disruption. Solving $\mathcal{P}_m^{\mathrm{reop}}(k_{\min})$ yields the most
stable recovery plan, obtained quickly thanks to the restricted search space,
although its routing cost may be suboptimal. This solution thus serves as the
natural anchor of the Pareto front. The exact computation of $k_{\min}$ is
addressed in the following subsection.
\subsubsection{Analytical Bounds on the Minimal Feasible Deviation}
\label{eq:kmin_def}

The exploration of the stability--cost front takes place over an interval of
deviation radii $[k_{\min}, k_{\max}]$. Its lower end anchors the front at its
most stable extreme and must be exact, since it is the radius at which the first
recovery plan is produced. Its upper end bounds the exploration and must be
large enough not to exclude relevant plans, yet small enough to keep the search
finite. This section establishes both.

\medskip
\noindent
\textbf{Definition of the anchor.}
The minimal feasible deviation is
\begin{equation}
\label{eq:kmin-def}
k_{\min}
\;=\;
\min_{X \in \mathcal{F}(e)} \bigl\lVert X - X^{*} \bigr\rVert_1 ,
\end{equation}
that is, the smallest number of binary changes required to transform the
baseline tactical plan $X^{*}$ into a plan that is feasible for the disrupted
region $\mathcal{F}(e)$. Since $X, X^{*} \in \{0,1\}^{n}$, the $\ell_1$-distance
coincides with the Hamming distance and $k_{\min} \in \mathbb{Z}_{\geq 0}$. The
value $k_{\min} = 0$ is admissible and operationally meaningful: it indicates
that the baseline plan remains feasible under the disruption, which occurs for
instance after a demand variation that degrades cost without violating any
constraint.

\medskip
\noindent
\textbf{Exact computation.}
We compute $k_{\min}$ exactly, by solving the mixed-integer programme that
minimises the Hamming distance to the baseline over the disrupted feasible
region. Introducing, for every decision $i$, two binary variables
$\delta^{+}_{i}$ and $\delta^{-}_{i}$ that account respectively for an
activation and a deactivation with respect to $X^{*}$, the absolute value is
linearised and the programme reads
\begin{equation}
\label{eq:kmin-milp}
\begin{aligned}
k_{\min} \;=\; \min_{X,\, \delta^{+},\, \delta^{-}} \quad
& \sum_{i=1}^{n} \bigl( \delta^{+}_{i} + \delta^{-}_{i} \bigr) \\
\text{s.t.} \quad
& \delta^{+}_{i} \;\geq\; X_{i} - X^{*}_{i},
  \qquad i = 1,\dots,n, \\
& \delta^{-}_{i} \;\geq\; X^{*}_{i} - X_{i},
  \qquad i = 1,\dots,n, \\
& X \in \mathcal{F}(e), \\
& X \in \{0,1\}^{n}, \quad
  \delta^{+}, \delta^{-} \in \{0,1\}^{n} .
\end{aligned}
\end{equation}
The feasible region $\mathcal{F}(e)$ is expressed by the complete constraint
set of the reoptimization model: flow conservation, demand satisfaction under
the perturbed right-hand side, supply, loader and time-window restrictions,
together with the arc removals $\Delta\mathcal{A}(e)$ induced by the event and
the fixing of every decision completed before $\tau_e$. Formulation
\eqref{eq:kmin-milp} therefore shares exactly the feasible region of
$\mathcal{P}^{\mathrm{reop}}_m(\cdot)$ and differs from it only in the
objective.

\begin{proposition}[Exactness of the anchor]
\label{prop:kmin-exact}
Let $(X, \delta^{+}, \delta^{-})$ be an optimal solution of
\eqref{eq:kmin-milp}. Then
$\delta^{+}_{i} + \delta^{-}_{i} = |X_{i} - X^{*}_{i}|$ for every $i$, and the
optimal value of \eqref{eq:kmin-milp} equals $k_{\min}$ as defined
in~\eqref{eq:kmin-def}. Moreover, the associated plan $X$ attains the minimum
in~\eqref{eq:kmin-def}, so it is a most stable feasible recovery plan.
\end{proposition}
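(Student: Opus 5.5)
The plan is a standard linearisation-exactness argument in three steps. Throughout, write $\mathrm{OPT}$ for the optimal value of \eqref{eq:kmin-milp} and, for any $X\in\mathcal{F}(e)\cap\{0,1\}^n$, write $h(X)=\sum_i |X_i-X_i^*| = \lVert X-X^*\rVert_1$.

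\emph{Step 1 (pointwise minimality of the slack pair).} Fix an optimal $(X,\delta^+,\delta^-)$ and an index $i$. Since all quantities are binary, there are only three cases.
\begin{itemize}
\item If $X_i=X_i^*$, then $\delta_i^+=\delta_i^-=0$ is admissible.
\item If $X_i=1$ and $X_i^*=0$, the constraints force $\delta_i^+=1$, while $\delta_i^-=0$ is admissible.
\item If $X_i=0$ and $X_i^*=1$, they force $\delta_i^-=1$, while $\delta_i^+=0$ is admissible.
\end{itemize}
In every case the smallest admissible value of $\delta_i^++\delta_i^-$ is exactly $|X_i-X_i^*|$, and any admissible pair satisfies $\delta_i^++\delta_i^-\ge |X_i-X_i^*|$. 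Now suppose $\delta_i^++\delta_i^->|X_i-X_i^*|$ for some $i$. Lowering that pair to its minimal admissible values keeps $X$ unchanged, so $X\in\mathcal{F}(e)$ still holds, and it strictly decreases the objective. This contradicts optimality, so the first claim follows.

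\emph{Step 2 (equality of values).} Take any $X'\in\mathcal{F}(e)\cap\{0,1\}^n$ and set $\delta_i^{+}=\max(X'_i-X_i^*,0)$ and $\delta_i^{-}=\max(X_i^*-X'_i,0)$. These values are binary and feasible for \eqref{eq:kmin-milp}, and their objective value is $h(X')$. Hence $\mathrm{OPT}\le k_{\min}$. Conversely, the inequality noted in Step 1 gives, for any feasible triple, an objective at least $h(X)\ge k_{\min}$. Hence $\mathrm{OPT}\ge k_{\min}$, and so $\mathrm{OPT}=k_{\min}$.

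\emph{Step 3 (the plan attains the minimum).} Combining Steps 1 and 2 gives $h(X)=\sum_i(\delta_i^++\delta_i^-)=\mathrm{OPT}=k_{\min}$. So the plan $X$ attains the minimum in \eqref{eq:kmin-def}.

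The only delicate point is existence. I will note that the feasible set is finite because all variables are binary. It is also nonempty, provided $\mathcal{F}(e)\neq\emptyset$ (the $\delta_i^{\pm}$ are otherwise unconstrained); this is implicit in the statement's assumption that an optimal solution exists. These two facts make both minima well defined and attained. The rest is routine case analysis.
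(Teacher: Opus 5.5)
Your proof is correct and takes the same route as the paper: the linearisation constraints force $\delta^{+}_{i}+\delta^{-}_{i}\ge |X_i-X^{*}_i|$, optimality forces equality, and hence the objective coincides with $\lVert X-X^{*}\rVert_1$ over $\mathcal{F}(e)$. Your Step~2 makes explicit the direction $\mathrm{OPT}\le k_{\min}$, by building the canonical slack pair for an arbitrary feasible $X'$; the paper leaves this implicit in the phrase ``the objective therefore coincides with $\lVert X-X^{*}\rVert_1$ over $\mathcal{F}(e)$''.
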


\begin{proof}
Since $X_i, X^{*}_i \in \{0,1\}$, at most one of $X_i - X^{*}_i$ and
$X^{*}_i - X_i$ is positive, and the constraints impose
$\delta^{+}_{i} + \delta^{-}_{i} \geq |X_{i} - X^{*}_{i}|$. As the objective
minimises this sum with non-negative coefficients, equality holds at any
optimum. The objective therefore coincides with $\lVert X - X^{*}\rVert_1$ over
$\mathcal{F}(e)$, and minimising it returns $k_{\min}$ together with a
minimiser.
\end{proof}

Solving \eqref{eq:kmin-milp} to optimality is not a computational obstacle in
our setting, for three reasons. The routing cost is absent from the objective,
which removes the coupling that makes the reoptimization model difficult and
leaves a pure feasibility-restoration problem. The fixing of all decisions
completed before $\tau_e$ eliminates a large share of the binary variables,
the more so as the event occurs later in the week. Finally, the disruption
restrictions themselves prune the arc set. In our experiments,
\eqref{eq:kmin-milp} is solved to proven optimality within seconds, against
minutes for a single evaluation of $\mathcal{P}^{\mathrm{reop}}_m(k)$.

Exactness matters operationally, and is the reason we solve
\eqref{eq:kmin-milp} rather than resorting to a primal heuristic. A radius
below $k_{\min}$ renders $\mathcal{P}^{\mathrm{reop}}_m(k)$ infeasible, so an
underestimate wastes a solver call and returns no plan at all. An overestimate,
conversely, silently discards the most stable recovery plans, precisely those
that dispatchers value most when the disruption is minor. Since the anchor also
calibrates the step sizes of the search, an inexact value would propagate to the
entire exploration.

\medskip
\noindent
\textbf{Structural cap.}
Let $\mathcal{S}^{*} = \{ i : X^{*}_{i} = 1 \}$ denote the set of decisions
active in the baseline plan and $n_{\mathrm{act}} = |\mathcal{S}^{*}|$ its
cardinality. We set
\begin{equation}
\label{eq:kmax-structural}
k_{\max} \;=\; n_{\mathrm{act}} .
\end{equation}
At this radius the recovered plan may differ from the baseline in every
movement the baseline performs. Beyond it, the local branching constraint no
longer conveys any stability requirement: the recovered plan bears no
resemblance to the schedule around which drivers, loaders and mill operators
have organised their day, which is the very property the constraint is meant to
preserve. Radii larger than $n_{\mathrm{act}}$ are therefore excluded on
operational grounds rather than computational ones.

\medskip
\noindent
\textbf{Determination of the effective cap by relaxation.}
The structural cap bounds the search but does not indicate when it becomes
useless to continue. This is obtained from the continuous relaxation. Let
$z^{*}(k)$ denote the optimal cost of $\mathcal{P}^{\mathrm{reop}}_m(k)$ and let
\begin{equation}
\label{eq:lp-floor}
z_{\mathrm{LP}}
\;=\;
\min \Bigl\{\, z(X) \;:\; X \in [0,1]^{n},\;
X \text{ satisfies the constraints of } \mathcal{F}(e),\;
\lVert X - X^{*} \rVert_1 \leq k_{\max} \,\Bigr\}
\end{equation}
be the value of the linear relaxation obtained by replacing $X \in \{0,1\}^n$
with $X \in [0,1]^n$ at the structural cap. This single linear programme is
solved once, at initialization, in a few seconds.

\begin{proposition}[LP floor and effective cap]
\label{prop:lp-floor}
For every $k \in [k_{\min}, k_{\max}]$,
\begin{equation}
\label{eq:floor-ineq}
z_{\mathrm{LP}} \;\leq\; z^{*}(k_{\max}) \;\leq\; z^{*}(k) .
\end{equation}
Consequently, if a radius $\hat{k}$ satisfies
$z^{*}(\hat{k}) \leq z_{\mathrm{LP}} + \varepsilon$, then no radius
$k > \hat{k}$ can improve the cost by more than $\varepsilon$, and $\hat{k}$ is
an effective cap for the exploration.
\end{proposition}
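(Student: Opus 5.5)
The plan is to prove the two inequalities in \eqref{eq:floor-ineq} separately, each by an inclusion of feasible regions for minimisation problems that share the objective $z$, and then to read off the effective-cap statement directly from the chain.

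\textbf{Right inequality.} Write $\mathcal{F}_k = \{X \in \mathcal{F}(e) \cap \{0,1\}^n : \lVert X - X^{*}\rVert_1 \leq k\}$ for the feasible set of $\mathcal{P}^{\mathrm{reop}}_m(k)$. By Proposition~\ref{prop:kmin-exact}, the linearised deviation $\sum y_{atv}$ coincides with the Hamming distance at optimality. For $k \in [k_{\min}, k_{\max}]$, this set is nonempty, since the minimiser produced by Proposition~\ref{prop:kmin-exact} lies in $\mathcal{F}_{k_{\min}}$. The map $k \mapsto \mathcal{F}_k$ is monotone, so $\mathcal{F}_k \subseteq \mathcal{F}_{k_{\max}}$ for $k \leq k_{\max}$. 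Minimising the same objective over a larger set cannot increase the optimum, hence $z^{*}(k_{\max}) \leq z^{*}(k)$.

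\textbf{Left inequality.} The feasible set in \eqref{eq:lp-floor} is obtained from $\mathcal{F}_{k_{\max}}$ by dropping integrality. Every binary point of $\mathcal{F}_{k_{\max}}$ lies in $[0,1]^n$, satisfies the linear constraints of $\mathcal{F}(e)$, and satisfies the $\ell_1$ bound. So $\mathcal{F}_{k_{\max}}$ is contained in the relaxed region, which gives $z_{\mathrm{LP}} \leq z^{*}(k_{\max})$. One point needs care here. The $\ell_1$ constraint in \eqref{eq:lp-floor} is nonlinear as written, but over $[0,1]^n$ it equals $\sum_{i:X^*_i=0} X_i + \sum_{i:X^*_i=1}(1-X_i)$, which is linear. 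The relaxation is therefore a genuine LP and still contains the binary points. The integer unmet-demand variables $\delta_{mp}$ and the big-$M$ rows are relaxed in the same way, which only enlarges the region further.

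\textbf{Consequence.} Suppose $z^{*}(\hat{k}) \leq z_{\mathrm{LP}} + \varepsilon$. For any $k > \hat{k}$ within the interval, the chain gives $z^{*}(k) \geq z_{\mathrm{LP}} \geq z^{*}(\hat{k}) - \varepsilon$. So no such radius improves the cost by more than $\varepsilon$. For $k > k_{\max}$, the radius is excluded by the structural cap \eqref{eq:kmax-structural}.

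\textbf{Main obstacle.} The delicate step is not the inclusions but the definitional bookkeeping. First, the local branching constraint \eqref{eq:PLBconstr} is stated with auxiliary variables $y$ that only satisfy $y \geq |x - x^*|$. I will argue that an $X$ is feasible for $\mathcal{P}^{\mathrm{reop}}_m(k)$ exactly when $\lVert X-X^*\rVert_1 \leq k$, by taking $y = |X - X^*|$, which is both sufficient and the tightest choice. Second, the objective of $\mathcal{P}^{\mathrm{reop}}_m$ in \eqref{eq:PLB} omits the penalty term of \eqref{objfun}. I will take $z$ to be the same function in both problems, so that the comparisons are consistent.
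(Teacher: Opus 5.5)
Your proof is correct and follows the paper's argument. The relaxed region at $k_{\max}$ contains the integer region, the nesting $\mathcal{X}(k)\subseteq\mathcal{X}(k_{\max})$ gives the right inequality, and the $\varepsilon$-statement is read off the chain as you do. Your extra bookkeeping is sound and makes explicit what the paper leaves implicit: the $\ell_1$ bound is linear over $[0,1]^n$, $y$ is projected out by taking $y=|X-X^{*}|$, and $k>\hat{k}$ is restricted to $k\le k_{\max}$. The only slip is citing Proposition~\ref{prop:kmin-exact} for the $y$ variables, since that result concerns $\delta^{\pm}$ in \eqref{eq:kmin-milp}; your projection argument is what actually carries that step.
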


\begin{proof}
The relaxed feasible set at $k_{\max}$ contains the integer feasible set at
$k_{\max}$, hence $z_{\mathrm{LP}} \leq z^{*}(k_{\max})$. The nesting
$\mathcal{X}(k) \subseteq \mathcal{X}(k_{\max})$ for $k \leq k_{\max}$ gives
$z^{*}(k_{\max}) \leq z^{*}(k)$, which yields~\eqref{eq:floor-ineq}. If
$z^{*}(\hat{k}) \leq z_{\mathrm{LP}} + \varepsilon$, then for any
$k > \hat{k}$ we have
$z^{*}(k) \geq z_{\mathrm{LP}} \geq z^{*}(\hat{k}) - \varepsilon$, so the
attainable improvement is at most $\varepsilon$.
\end{proof}

The interval $[k_{\min}, k_{\max}]$ is thus delimited from below by an exact
value and from above by a structural cap refined at run time by the LP floor.
The relative distance to that floor,
\begin{equation}
\label{eq:gap-lp}
g \;=\;
\frac{z^{*}(k) - z_{\mathrm{LP}}}{\max\bigl\{ |z^{*}(k)|,\, 1 \bigr\}} ,
\end{equation}
measures the improvement potential that remains and is supplied to the
reinforcement learning agent of Section~\ref{sec:rl_approach} as a state
feature. A value close to zero certifies that the staircase has reached its
floor and that terminating is rational; a strictly positive value indicates
that a larger radius may still uncover a non-dominated plan, and justifies
crossing a plateau rather than stopping on it. The agent of
Section~\ref{sec:rl_approach} accordingly starts its exploration at $k_{\min}$
and navigates $[k_{\min}, k_{\max}]$ under this certificate.
\subsubsection{Reinforcement learning for adaptive Pareto front exploration}
\label{sec:rl_approach}

Solving the local branching model $\mathcal{P}^{\mathrm{reop}}_m(k)$ at a single
value of $k$ yields a single recovery plan. As mentioned before, dispatchers require instead a set of
alternatives spanning the stability--cost trade-off. This set is the Pareto front
of the bi-objective problem $\min\,(k,\, z^{*}(k))$, in which the best achievable
cost at deviation $k$ and the corresponding feasible set are
\begin{equation}
    z^{*}(k) = \min\{\, z(X) : X \in \mathcal{X}(k) \,\},
    \qquad
    \mathcal{X}(k) = \mathcal{F}(e) \cap
    \bigl\{ X \in \{0,1\}^{n} : \lVert X - X^{*}\rVert_1 \leq k \bigr\}.
    \label{eq:nested_sets}
\end{equation}
Enumerating this front over a dense grid of values is computationally
prohibitive: each evaluation of $z^{*}(k)$ requires solving a large-scale MILP,
and most values yield no new non-dominated plan.
\paragraph{Structure of the front.}
The sets $\mathcal{X}(k)$ are nested, $\mathcal{X}(k) \subseteq \mathcal{X}(k')$
for $k \leq k'$, so $z^{*}(\cdot)$ is non-increasing and piecewise constant: it
is a staircase whose steps are exactly the non-dominated points. Two
consequences are exploited by the search. First, the front is anchored at
$k_{\min}$, the smallest feasible value. Second, detecting a new non-dominated
point reduces to a scalar test on the running envelope
\begin{equation}
    \bar{z}(k) \;=\; \min\{\, \hat{z}(k') \;:\; k' \in \mathcal{K},\ k' \leq k \,\},
    \label{eq:envelope}
\end{equation}
where $\mathcal{K}$ is the set of values already evaluated and $\hat{z}(k')$ the
objective recorded at $k'$: a solve at $k$ is Pareto-improving if and only if it
reduces $\bar{z}(k)$ by more than the absolute optimality tolerance of the
solver.

The agent interacts with the reoptimization environment as a Markov decision
process $(\mathcal{S}, \mathbb{A}, r, \gamma)$, with one episode per disruption
event. At each iteration the agent selects a value of $k$, the solver evaluates
$\mathcal{P}^{\mathrm{reop}}_m(k)$ under a per-call limit $T_{\mathrm{solve}}$,
and the outcome updates the envelope, the front $\mathcal{F}^{*}$ and the state.

\paragraph{Initialization.}
Before the learning loop, the framework computes the minimal feasible deviation
$k_{\min}$ defined in \ref{eq:kmin_def}, by solving a MILP that minimizes the
$\ell_1$-distance to the reference plan subject to the rescheduling constraints,
the disruption restrictions, and the fixing of decisions completed before the
event. This solve is deterministic and lies
outside the policy: it anchors the front at its most stable extreme and
guarantees that one recovery option exists regardless of the severity of the
disruption.

A second quantity is computed once, in a single LP solve: the value
$z_{\mathrm{LP}}$ of the continuous relaxation at the upper limit $k_{\max}$.
Since $z_{\mathrm{LP}} \leq z^{*}(k)$ for every $k$, it is a valid floor for the
whole staircase and measures the improvement that may still remain. It is used
both as a state feature and in the termination logic: a search whose incumbent
has reached this floor cannot improve further, whereas a persistent gap indicates
that a larger value of $k$ may still pay off.

\paragraph{State space.}
At iteration $t$ the agent observes $s_t \in \mathbb{R}^{9}$,
\begin{equation}
    s_t = \Big[
        \tfrac{k_t}{k_{\min}},\;
        \tfrac{|z_{0} - z^{*}_{t}|}{|z_{0}|},\;
        \tfrac{t_{\mathrm{solve}}}{T_{\mathrm{solve}}},\;
        \tfrac{t_{\mathrm{elapsed}}}{T_{\mathrm{total}}},\;
        \mathbf{1}[\text{Pareto}],\;
        \mathbf{1}[\text{feasible}],\;
        \tfrac{|\mathcal{F}^{*}_{t}|}{\bar{N}},\;
        \phi_e,\;
        g_t
    \Big],
    \label{eq:state}
\end{equation}
where all components are normalized so that they vary on comparable ranges, which
is what makes the coefficients of the linear policy comparable to one another.

The first two features locate the search in the stability--cost space. The
expansion ratio $k_t / k_{\min}$ compares the current value of $k$ with the
stable anchor, and equals one at the first solve. The relative improvement
compares the incumbent $z^{*}_{t}$, the best objective found up to iteration $t$,
with $z_{0} = z^{*}(k_{\min})$, the objective of the anchor plan. The next two features describe the computational state. Here $t_{\mathrm{solve}}$
is the time consumed by the most recent solver call and $T_{\mathrm{solve}}$ the
limit imposed on each call, so their ratio measures the difficulty of the last
subproblem: a value close to one indicates that the solve was truncated.
Likewise, $t_{\mathrm{elapsed}}$ is the time consumed since the beginning of the
episode and $T_{\mathrm{total}}$ the budget allocated to the disruption event, so
their ratio reports how much of that budget remains and drives the agent to
concentrate exploration early. The two binary indicators state whether the last solve produced a new
non-dominated point and whether it returned a feasible solution; the distinction
matters because an infeasible solve carries no information about the staircase.
The seventh feature measures the coverage of the front, where
$|\mathcal{F}^{*}_{t}|$ is the number of non-dominated plans found up to
iteration $t$ and $\bar{N}$ a normalizing constant, the ratio being capped at
one. The last two features condition the qualitative behaviour of the policy. The
categorical encoding
$\phi_e \in \{0,\,0.25,\,0.5,\,0.75,\,1\}$ identifies the disruption type, in the
order road closure, truck breakdown, travel delay, demand increase and demand
decrease, so that a single policy can adapt to the nature of the event; localized
and systemic disruptions have markedly different productive regions. The relative
gap to the linear-programming floor $z_{\mathrm{LP}}$,
\begin{equation}
    g_t \;=\; \min\left\{ 
    \frac{z^{*}_{t} - z_{\mathrm{LP}}}{\max\{|z^{*}_{t}|,\,1\}},\;\; 1 \right\},
    \label{eq:gap_lp}
\end{equation}
is close to zero when the incumbent has reached the floor and no further step can
exist, and strictly positive when improvement potential remains. Without it, a
plateau of equal objective values is indistinguishable from an exhausted search;
this feature is what allows the agent to separate the two situations and to
choose between stopping and jumping further.

\paragraph{Action space.}
The agent selects among four actions,
\begin{equation}
    \mathbb{A} = \{\texttt{stop},\; \texttt{large-step},\;
                   \texttt{small-step},\; \texttt{bisect}\},
    \label{eq:actions}
\end{equation}
whose semantics are defined relative to the set of values already evaluated, so
that each action targets a region that has not yet been explored. Let
$\hat{k} = \max \mathcal{K}$ denote the largest value evaluated so far. The two
forward actions extend the sweep beyond $\hat{k}$,
\begin{equation}
    k_{t+1} =
    \begin{cases}
        \hat{k} + \delta_{\mathrm{s}}, & \text{if } a_t = \texttt{small-step},\\[2pt]
        \hat{k} + \delta_{\mathrm{L}}, & \text{if } a_t = \texttt{large-step},
    \end{cases}
    \qquad
    \delta_{\mathrm{s}} = \max\Bigl\{5,\ \bigl\lceil \tfrac{k_{\min}}{2}\bigr\rceil\Bigr\},
    \quad
    \delta_{\mathrm{L}} = \max\{2\delta_{\mathrm{s}},\ \bar{\tau}\},
    \label{eq:steps}
\end{equation}
where $\bar{\tau}$ is the average number of binary changes induced by a single
truck trip, estimated from the reference plan as the ratio of active arcs to
trips. Both increments are instance-adaptive: the small step refines the sweep at
the scale of the anchor, while the large step advances by approximately one trip,
the granularity at which the staircase actually changes.

The \texttt{bisect} action searches where a new non-dominated point may still
exist. Consider two consecutive evaluated values $k_1 < k_2$ with
$k_2 - k_1 > 1$. If $\bar{z}(k_1) > \bar{z}(k_2)$, the cost drops somewhere in
$(k_1, k_2]$, so a non-dominated point may lie strictly between them; we call
such a pair an \emph{active interval}. The action selects the active interval
with the largest cost drop and evaluates its midpoint, which halves the range of
values of $k$ in which the point can lie.

Finally, \texttt{stop} ends the episode. Making termination an explicit decision,
rather than a passive consequence of budget exhaustion, is what allows the agent
to learn when to conserve computation. All values are clipped to
$[k_{\min},\, k_{\max}]$, where $k_{\max}$ is the number of active arcs of the
reference plan, beyond which the local branching constraint is no longer binding.
\paragraph{Reward function.}
The reward is aligned with the objective of the search, namely the discovery of
many non-dominated plans as early as possible:
\begin{equation}
    r_t =
    \begin{cases}
        \rho_{\mathrm{P}}
        + \rho_{\mathrm{D}} \min\Bigl\{\dfrac{\Delta z_t}{\alpha\,|z^{\mathrm{ref}}|},\,1\Bigr\}
        + \rho_{\mathrm{T}}\dfrac{t_{\mathrm{solve}}}{T_{\mathrm{solve}}},
        & \text{new non-dominated point},\\[10pt]
        \rho_{\mathrm{W}} + \rho_{\mathrm{T}}\dfrac{t_{\mathrm{solve}}}{T_{\mathrm{solve}}},
        & \text{new solve, no new point},\\[10pt]
        \rho_{\mathrm{C}}, & \text{value already evaluated},
    \end{cases}
    \label{eq:reward}
\end{equation}
where $\Delta z_t$ is the reduction of the envelope obtained at iteration $t$ and
$z^{\mathrm{ref}}$ the objective of the reference plan before the disruption. A
solve that extends the front earns the reward $\rho_{\mathrm{P}}$, increased by a
bonus of magnitude at most $\rho_{\mathrm{D}}$ that grows with the cost
reduction, the scale $\alpha$ setting the drop above which the bonus saturates;
large steps of the staircase are thus valued above marginal ones. A solve that
returns no new point receives the penalty $\rho_{\mathrm{W}}$. In both cases the
reward also carries the term $\rho_{\mathrm{T}}$, weighted by the fraction of the
per-call limit consumed, which reflects the cost of the call and leads the agent
to explore early. If the agent proposes a value of $k$ already evaluated, the
recorded objective is returned without calling the solver, and the penalty
$\rho_{\mathrm{C}}$ discourages repetition at no computational cost.

Stopping is rewarded according to the evidence available when the decision is
taken:
\begin{equation}
    r^{\texttt{stop}} =
    \begin{cases}
        +\rho_{\mathrm{S}}, & \text{no active interval and } g_t \leq \varepsilon_{\mathrm{LP}},\\[2pt]
        0,                  & \text{no active interval and } g_t > \varepsilon_{\mathrm{LP}},\\[2pt]
        -\rho_{\mathrm{S}}, & \text{at least one active interval remains}.
    \end{cases}
    \label{eq:stop_reward}
\end{equation}
Stopping is therefore rewarded only when no interval remains to be searched and
the incumbent has reached the floor, is neutral when no interval remains but
improvement is still possible, and is penalized when an active interval remains,
which discourages premature termination. The reward $\rho_{\mathrm{P}}$, the
bonus $\rho_{\mathrm{D}}$, the penalties $\rho_{\mathrm{W}}$, $\rho_{\mathrm{C}}$
and $\rho_{\mathrm{T}}$, the termination magnitude $\rho_{\mathrm{S}}$, and the
scale $\alpha$ are reported in Table~\ref{tab:rl_hyperparams}.

\paragraph{Policy and optimization.}
The policy $\pi_\theta : \mathcal{S} \to \Delta^{|\mathbb{A}|}$ maps the state to
action logits through a single linear layer,
$\pi_\theta(s) = \mathrm{softmax}(W s + b)$ with $W \in \mathbb{R}^{4 \times 9}$
and $b \in \mathbb{R}^{4}$, initialized following
\cite{glorot2010understanding}. The architecture is deliberately parsimonious:
each solver call consumes minutes, so an episode yields only a handful of
transitions and a deeper network would overfit; the linear form also keeps each
coefficient interpretable as the influence of one feature on the propensity to
advance, bisect or stop. Actions are sampled from the induced categorical
distribution, which provides exploration without an explicit schedule.

Parameters are optimized with REINFORCE \cite{williams1992simple}. For an episode
$\tau = (s_0, a_0, r_0, \ldots, s_T)$ the discounted returns
$G_t = \sum_{u \geq t} \gamma^{\,u-t} r_u$ are standardized within the episode,
which acts as a variance-reduction baseline and matters here because episodes are
short and rewards are sparse. The loss augments the policy-gradient term with an
entropy bonus,
\begin{equation}
    \mathcal{L}(\theta) = -\sum_{t=0}^{T} \hat{G}_t \log \pi_\theta(a_t \mid s_t)
    \;-\; \beta \sum_{t=0}^{T} H\bigl(\pi_\theta(\cdot \mid s_t)\bigr),
    \label{eq:loss}
\end{equation}
which prevents the policy from collapsing prematurely onto the \texttt{stop}
action, a degenerate optimum that is otherwise attractive because stopping is
free whereas probing is not. Updates are applied by Adam \cite{kingma2014adam}
after each completed episode.

\paragraph{Termination.}
An episode ends when the agent selects \texttt{stop}; when the remaining budget
falls below the reserve required for one solve; when $N^{\mathrm{streak}}$
consecutive \emph{fresh} probes fail to extend the front, cached proposals being
excluded from the count so that the criterion measures genuine exploration; or
when the search is provably exhausted, that is, when $\hat{k} = k_{\max}$ and no
active interval remains.

\paragraph{Training approach.}
A separate policy is trained for each weekly instance, which matches the
operating rhythm of the partner: the tactical plan is released at the end of the
week, the agent is trained before execution begins, and the resulting policy is
available when disruptions occur during the week. Specializing the policy to the
instance is therefore operationally natural, and it allows the learned rule to
adapt to the structure of the staircase induced by that particular network and
fleet.

Training runs for $N_{\mathrm{epoch}} = 50$ epochs. A fresh scenario of
disruptions is regenerated at every epoch, with the generator seed advancing, so
that the agent is exposed to different events rather than replaying a fixed set;
this prevents overfitting to a particular scenario. The first epoch is run in
observation mode, without gradient updates, to collect baseline performance. Table~\ref{tab:rl_hyperparams} reports the
complete parameter setting.

\begin{table}[ht]
\centering
\caption{Parameter setting of the reinforcement learning agent.}
\label{tab:rl_hyperparams}
\small
\begin{tabular}{llc}
\toprule
\textbf{Symbol} & \textbf{Description} & \textbf{Value} \\
\midrule
$N_{\mathrm{epoch}}$ & Training epochs & $50$ \\
$T_{\mathrm{total}}$ & Time budget per disruption event (s) & $900$ \\
$T_{\mathrm{solve}}$ & Time limit per solver call (s) & $180$ \\
$N^{\mathrm{streak}}$ & Fresh non-improving probes before stopping & $6$ \\
$\delta_{\mathrm{s}}$ & Small step & $\max\{5, \lceil k_{\min}/2\rceil\}$ \\
$\delta_{\mathrm{L}}$ & Large step & $\max\{2\delta_{\mathrm{s}}, \bar{\tau}\}$ \\
$k_{\max}$ & Upper limit on $k$ & active arcs of $X^{*}$ \\
$\bar{N}$ & Normalization constant for the front size & $10$ \\
$\varepsilon_{\mathrm{LP}}$ & Tolerance on the gap to the LP floor & $10^{-4}$ \\
$\rho_{\mathrm{P}},\ \rho_{\mathrm{D}}$ & Pareto reward and drop bonus & $1.0$,\ $0.5$ \\
$\rho_{\mathrm{W}},\ \rho_{\mathrm{C}},\ \rho_{\mathrm{T}}$ & Wasted, cached, time penalties & $-0.3$,\ $-0.5$,\ $-0.1$ \\
$\rho_{\mathrm{S}}$ & Termination reward magnitude & $0.5$ \\
$\alpha$ & Scale of the drop bonus & $0.01$ \\
$\gamma$ & Discount factor & $0.99$ \\
$\beta$ & Entropy coefficient & $0.01$ \\
$\eta$ & Learning rate (Adam) & $10^{-2}$ \\
\bottomrule
\end{tabular}
\end{table}


\section{Computational Study}
\label{sec:experiments}

This section reports the computational experiments carried out to evaluate the proposed framework for the reoptimization of the $\mathcal{LTRSP}$ under unforeseen events. The objective of the study is to quantify the efficiency, adaptability, and resilience of the integrated methodology, as well as the relative contribution of its two main components: the mathematical programming approach based on local branching and the reinforcement learning module. All experiments were conducted on a 3.7~GHz 32-core CPU workstation with 128~GB RAM. The MILP components were solved with \texttt{Gurobi~12.0}, and the learning agent was implemented in \texttt{PyTorch~2.2}. 

The experimental plan follows a structured process inspired by resilience-oriented optimization studies. Each experiment begins with the generation of a nominal tactical plan $\hat{x}$ for a given weekly instance, solved under deterministic conditions with the base model $\mathcal{P}_m$. This nominal plan represents the optimal routing and scheduling configuration in the absence of disruptions. 

Once the baseline is established, a set of scenarios $\Omega = \{\mathcal{E}_1, \ldots, \mathcal{E}_{|\Omega|}\}$ is produced by the Unforeseen Events Generator described in Section~\ref{sec:methodology}. Each scenario $\mathcal{E}_\omega$ corresponds to a distinct realization of disruptions over the planning horizon, including the occurrence time, spatial location, and duration of each event. The Disruption Characterization and Impact Module (DCIM) then transforms each $\mathcal{E}_\omega$ into technical model modifications $(\Delta A_\omega, \Delta \theta_\omega)$, which are applied to the original network and parameter set. The resulting instance $\mathcal{P}_m(e)$ captures both the new feasibility conditions and the operational impacts of the disruptions.

\subsection{Instances and Unforeseen Events Generation}
\label{sec:instances_events}

The computational experiments are based on 20 weekly instances 
$\mathcal{W} = \{W_1, W_2, \ldots, W_{20}\}$ derived from real operational 
data provided by our industrial partner. Each instance corresponds to a 
one-week tactical plan of log-truck movements between forest blocks, mills, 
and home bases.

Following the classification proposed in prior large-scale vehicle routing 
studies, the instances are grouped according to the density of their 
routing network, expressed by the total number of arcs $|A|$. Specifically, 
small instances contain fewer than 30{,}000 arcs, medium instances include 
between 30{,}000 and 50{,}000 arcs, and large instances exceed 50{,}000 
arcs. The number of arcs serves as a proxy for network complexity and 
directly affects computational difficulty: denser networks yield 
exponentially more feasible routes, leading to a combinatorial explosion of 
potential trip combinations. This characteristic is particularly relevant 
in heterogeneous-fleet problems such as the $\mathcal{LTRSP}$, where 
vehicle types, capacities, and equipment configurations interact with the 
network structure.

Each instance includes up to five days of operations, multiple product 
categories, and time-dependent travel times calibrated from GPS and 
road-condition data. The baseline plan $X^*$ obtained from the 
deterministic optimization model $\mathcal{P}_m$ serves as the reference 
solution for all disruption experiments. The full characteristics of the 
20 weekly instances are reported in Table~\ref{instances-characteristics}.

\begin{table}[H]
\centering
\caption{Characteristics of the 20 weekly instances categorized by number 
of arcs. Instance: label of the weekly instance; Dates: planning horizon; 
Served Mills: number of mills served; Used Blocks: number of forest blocks 
used; Products: number of product types; Vehicles: number of vehicles 
available; Homebases: number of truck home bases; Nodes: number of nodes in 
the network; Arcs: number of arcs; Total Demand (GMT): total demand 
expressed in green metric tonnes; Avg Distance: average distance per arc 
(km); Max Distance: maximum distance between any two points (km).}
\resizebox{\textwidth}{!}{%
\begin{tabular}{cccccccccccc}
\hline
\textbf{Instance} & \textbf{Dates} & \textbf{Served Mills} & \textbf{Used Blocks} & \textbf{Products} & \textbf{Vehicles} & \textbf{Homebases} & \textbf{Nodes} & \textbf{Arcs} & \textbf{Total Demand (GMT)} & \textbf{Avg Distance} & \textbf{Max Distance} \\ \hline

\multicolumn{12}{c}{\textbf{Small Instances (Arcs $<$ 30,000)}} \\ \hline
$W_1$  & 2025-01-10 -- 2025-01-14 & 11 & 25 & 3 & 70 & 27 & 1295 & 26372 & 6140  & 242.06 & 574.00 \\ \hline
$W_2$  & 2024-12-30 -- 2025-01-04 & 13 & 24 & 3 & 60 & 19 & 1303 & 26504 & 8820  & 272.00 & 629.27 \\ \hline
$W_3$  & 2024-12-24 -- 2024-12-29 & 11 & 19 & 3 & 36 & 19 & 1063 & 17189 & 3820  & 259.31 & 651.28 \\ \hline
$W_4$  & 2024-11-29 -- 2024-12-03 & 13 & 23 & 3 & 51 & 25 & 1267 & 26934 & 7180  & 245.39 & 680.30 \\ \hline
$W_5$  & 2024-11-24 -- 2024-11-28 & 15 & 23 & 3 & 53 & 17 & 1321 & 29265 & 10440 & 206.22 & 628.94 \\ \hline

\multicolumn{12}{c}{\textbf{Medium Instances (30,000 $\leq$ Arcs $\leq$ 50,000)}} \\ \hline
$W_6$    & 2024-12-04 -- 2024-12-08 & 14 & 24 & 3 & 57 & 25 & 1333 & 32603 & 9600  & 231.08 & 660.30 \\ \hline
$W_7$    & 2024-12-09 -- 2024-12-13 & 17 & 30 & 3 & 66 & 27 & 1629 & 45237 & 14250 & 227.31 & 574.37 \\ \hline
$W_8$    & 2024-11-19 -- 2024-11-23 & 14 & 22 & 3 & 67 & 27 & 1267 & 32044 & 10800 & 283.70 & 657.06 \\ \hline
$W_9$    & 2024-11-14 -- 2024-11-18 & 17 & 23 & 3 & 68 & 30 & 1376 & 37120 & 10320 & 229.56 & 660.30 \\ \hline
$W_{10}$ & 2024-11-09 -- 2024-11-13 & 16 & 21 & 3 & 62 & 26 & 1280 & 31172 & 9630  & 225.54 & 657.06 \\ \hline
$W_{11}$ & 2024-10-30 -- 2024-11-03 & 18 & 46 & 4 & 59 & 27 & 1465 & 44237 & 12930 & 241.16 & 728.76 \\ \hline
$W_{12}$ & 2024-10-14 -- 2024-10-18 & 16 & 26 & 4 & 56 & 26 & 1464 & 36989 & 8140  & 264.96 & 739.14 \\ \hline
$W_{13}$ & 2024-10-10 -- 2024-10-14 & 13 & 37 & 4 & 62 & 27 & 1777 & 40795 & 11330 & 252.04 & 739.14 \\ \hline
$W_{14}$ & 2025-01-05 -- 2025-01-09 & 16 & 31 & 3 & 88 & 27 & 1637 & 43155 & 20130 & 244.92 & 574.00 \\ \hline

\multicolumn{12}{c}{\textbf{Large Instances (Arcs $>$ 50,000)}} \\ \hline
$W_{15}$ & 2024-10-25 -- 2024-10-29 & 17 & 37 & 4 & 72 & 30 & 1888 & 53103 & 12850 & 246.67 & 657.06 \\ \hline
$W_{16}$ & 2024-12-19 -- 2024-12-23 & 17 & 35 & 3 & 71 & 28 & 1808 & 50238 & 10540 & 268.47 & 651.28 \\ \hline
$W_{17}$ & 2024-12-14 -- 2024-12-18 & 17 & 38 & 3 & 64 & 27 & 1917 & 52736 & 11630 & 257.75 & 659.57 \\ \hline
$W_{18}$ & 2024-11-04 -- 2024-11-08 & 18 & 46 & 4 & 72 & 27 & 2233 & 86959 & 20240 & 220.54 & 702.36 \\ \hline
$W_{19}$ & 2024-10-20 -- 2024-10-24 & 17 & 40 & 4 & 78 & 30 & 1996 & 60449 & 23500 & 234.08 & 721.61 \\ \hline
$W_{20}$ & 2024-10-15 -- 2024-10-19 & 16 & 36 & 4 & 85 & 27 & 1825 & 50790 & 17170 & 227.70 & 739.14 \\ \hline

\end{tabular}%
}
\label{instances-characteristics}
\end{table}

To assess the robustness and adaptability of the proposed framework, each 
instance is subjected to a set of synthetically generated unforeseen events 
$\mathcal{E}_\omega$, designed to span the main classes of disruptions 
encountered in forest transportation systems. Following stochastic 
simulation principles widely used in resilient transportation and supply 
chain optimization~\citep{wang2023resilience}, the event-generation process 
explicitly accounts for randomness in occurrence, duration, location, and 
severity. Consistent with the framing of 
Section~\ref{sec:event-generator}, the distributions in 
Table~\ref{tab:event_distributions} are used here as a parameterised 
stress-test specification rather than as a calibrated probabilistic model 
of forestry incidents.

Formally, let $\Omega = \{\mathcal{E}_1, \ldots, \mathcal{E}_{|\Omega|}\}$ 
denote the set of disruption scenarios, where each scenario 
$\mathcal{E}_\omega$ is composed of a finite number of stochastic events:
\[
\mathcal{E}_\omega = \{e_1, e_2, \ldots, e_{n_\omega}\},
\qquad
e_i = (t_i, \; l_i, \; \tau_i,\; d_i).
\]

In line with common practice in the resilient scheduling literature ~\citep{gendreau201650th}, ten disruptions are generated per 
instance, corresponding to approximately two events per operational day. The stochastic mechanisms are summarised in 
Table~\ref{tab:event_distributions}.

The event generation follows a two-stage process combining temporal and 
spatial sampling. First, the number of events per day $N_t$ is sampled from 
a Poisson process,
\[
N_t \sim \mathrm{Poisson}(\lambda_t), \qquad t = 1,\ldots,T,
\]
where $\lambda_t$ denotes the expected daily frequency, set to two events 
per day in the experiments. Conditional on $N_t$, the occurrence time of 
each event within day $t$ is drawn from a continuous uniform distribution 
$\mathcal{U}(0,H)$ over the working hours $H = [6{:}00, 18{:}00]$. The 
duration $e_i$ and location $l_i$ are then drawn according to the 
corresponding event law in Table~\ref{tab:event_distributions}. Spatial 
dependencies are introduced for certain event types (e.g., road closures) 
through a correlated sampling mechanism: the occurrence probability of a 
closure on road $a $ is increased by a factor $\eta = 1.5$ if adjacent 
arcs have already been closed on the same day, reflecting the spatial 
propagation of weather-induced failures~\citep{flisberg2022optimized}.

\begin{table}[H]
\centering
\caption{Example of generated unforeseen events over a five-day horizon for 
a representative weekly instance.}
\label{tab:example_events}
\resizebox{\textwidth}{!}{
\begin{tabular}{cccccccc}
\hline
\textbf{Day} & \textbf{Event ID} & \textbf{Event Type} & \textbf{Location} & \textbf{Start Time} & \textbf{Duration} & \textbf{Distribution Source} & \textbf{Impact on Model} \\ \hline
1 & $e_1$    & Road closure     & Road (F23 $\rightarrow$ HB07) & 07:30 & 2 days   & Bernoulli($p=0.08$) + $U[1,3]$         & Arc removal $\Delta A(e_1)$ \\
1 & $e_2$    & Demand decrease  & Mill M12                      & 09:00 & 1 day    & $\mathcal{N}(0,0.1^2)$                 & $\Delta d_{mp} = -8\%$ \\
2 & $e_3$    & Truck breakdown  & Vehicle V34                   & 10:15 & 6 hours  & Poisson($\lambda=0.05$) + $U[4,12]$    & Vehicle unavailable $\Delta \theta(e_3)$ \\
2 & $e_4$    & Travel delay     & Arc (HB05 $\rightarrow$ M03)  & 13:45 & 3 hours  & Exp($\lambda=0.15$)                    & $\tau_{ij} \leftarrow \tau_{ij} + \Delta\tau$ \\
3 & $e_5$    & Road closure     & Road (F41 $\rightarrow$ M09)  & 08:10 & 1 day    & Bernoulli($p=0.08$) + $U[1,3]$         & Arc removal $\Delta A(e_5)$ \\
3 & $e_6$    & Demand increase  & Mill M15                      & 15:00 & 1 day    & $\mathcal{N}(0,0.1^2)$                 & $\Delta d_{mp} = +12\%$ \\
4 & $e_7$    & Truck breakdown  & Vehicle V05                   & 11:20 & 10 hours & Poisson($\lambda=0.05$) + $U[4,12]$    & Vehicle downtime $\Delta \theta(e_7)$ \\
4 & $e_8$    & Travel delay     & Arc (F06 $\rightarrow$ HB03)  & 14:30 & 2 hours  & Exp($\lambda=0.15$)                    & Shift in time windows \\
5 & $e_9$    & Road closure     & Road (HB01 $\rightarrow$ M02) & 06:45 & 2 days   & Bernoulli($p=0.08$) + $U[1,3]$         & $\Delta A(e_9)$, infeasible trips pruned \\
5 & $e_{10}$ & Demand decrease  & Mill M04                      & 09:00 & 1 day    & $\mathcal{N}(0,0.1^2)$                 & $\Delta d_{mp} = -5\%$ \\ \hline
\end{tabular}
}
\label{example-events}
\end{table}

The resulting set of events $\mathcal{E}_\omega$ thus spans both isolated 
and cascading disruptions across space and time. All events are sorted chronologically 
to build a dynamic timeline as presented in the example of table \ref{example-events}. This stochastic construction yields diverse 
scenarios, ranging from single short-duration events to concurrent 
multi-day disruptions, covering the operational range of interest for the 
resilience evaluation.

Each event is then passed to the Disruption Characterization and Impact 
Module (DCIM), which transforms it into a pair of model perturbations 
$(\Delta A(e), \Delta \theta(e))$. This transformation ensures that 
disruptions are encoded directly within the mathematical structure of the 
MILP model: road closures modify the feasible network by removing affected 
arcs, breakdowns reduce available vehicle capacity over specific time 
windows, delays shift time-dependent travel durations, and demand 
variations alter the right-hand side of demand constraints.

\subsection{Numerical results}
\label{sec:results}
We evaluate the proposed framework on 20 weekly instances
$\mathcal{W} = \{W_1, \ldots, W_{20}\}$ derived from real operational data
provided by our Canadian forestry partner, grouped into small
($|A| < 30{,}000$), medium ($30{,}000 \leq |A| \leq 50{,}000$), and large
($|A| > 50{,}000$) categories. Each instance is subjected to 10 synthetically
generated disruption scenarios spanning road closures, truck breakdowns, travel
delays, and demand variations, following the generation rules of Table \ref{tab:event_distributions}, which
delays, and demand variations, following the generation rules of Table , which
yields 200 recovery problems per strategy.

All strategies begin identically: the disruption is characterised, $k_{\min}$ is
computed, and the local branching model is solved at $k = k_{\min}$ to obtain the
first Pareto point, the most stable feasible recovery plan. They then differ only
in the rule that selects the next value of $k$:
\begin{enumerate}
    \item \textbf{Fixed-$k$ grid.} The interval $[k_{\min}, k_{\max}]$ is divided
    into equally spaced steps and $\mathcal{P}^{\mathrm{reop}}_m(k)$ is solved at
    each grid value in turn until the budget $T_{\mathrm{total}}$ is exhausted.
    The grid is set independently of what previous solves reveal, which makes it
    the natural reference for the effort required without any adaptive rule. Additional details are given in \ref{app:baselines}.

    \item \textbf{Dichotomic search.} A deterministic rule that repeatedly splits
    the active interval with the largest cost drop, advances beyond the largest
    value already evaluated when no active interval remains, and stops when
    neither operation applies. This strategy shares the entire computational
    infrastructure of the learned policy, namely the memory of evaluated values,
    the monotone envelope, the elimination of flat intervals and the warm start.
    It differs from it only in the selection rule, so the comparison isolates the
    contribution of learning from that of the structure of the front. Further details are presented in Appendix \ref{app:baselines}.

    \item \textbf{RL-guided.} The trained agent selects $k$ from the
    9-dimensional state vector of Equation~\eqref{eq:state}, concentrating solver
    calls where new non-dominated plans are most likely, jumping across plateaus
    when the gap to the floor indicates remaining potential, and terminating when
    the evidence shows that no further plan can be found.
\end{enumerate}
All strategies operate under the same budget $T_{\mathrm{total}}$ per event and
the same solver settings, so differences in the reported metrics are attributable
to the selection rule alone. Since the 200 recovery problems are solved by every
strategy, the metrics are compared pairwise on each event, and the significance
of the differences is assessed with the Wilcoxon signed-rank test.

\subsubsection{Training Convergence}

Figure~\ref{fig:rl_training} reports training metrics over 50 epochs. The
average return increases steadily from near zero to approximately 6.5,
confirming that the REINFORCE agent learns to select actions that produce
Pareto-improving solutions. The average number of Pareto points per event
rises from 1.0 at epoch~0 to approximately 6.7 at epoch~50, more than a
six-fold increase in the number of recovery options presented to the
dispatcher.
\begin{figure}[ht]
    \centering
    \includegraphics[width=\textwidth]{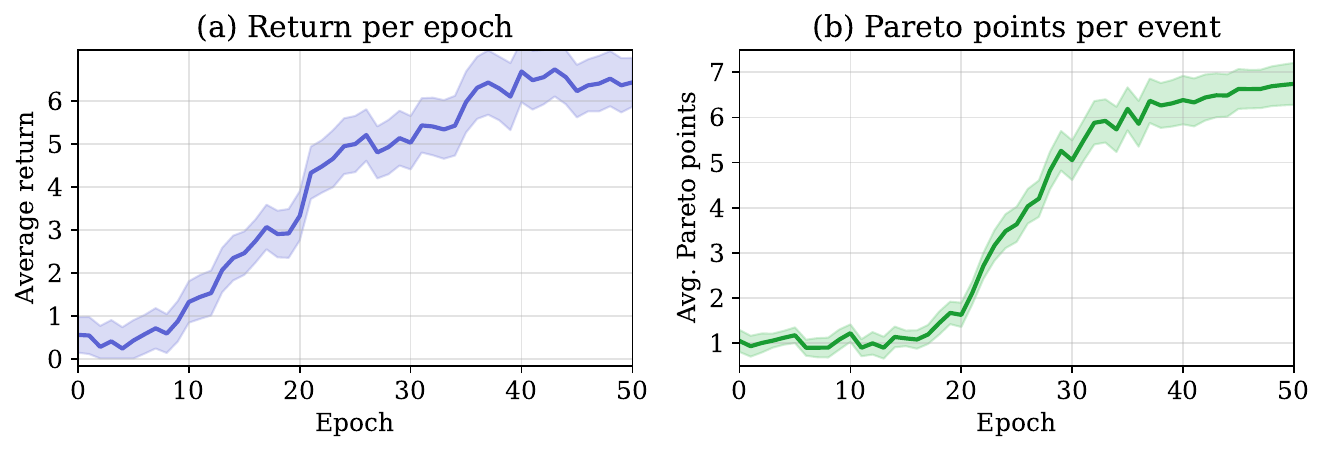}
    \caption{RL policy training over 50 epochs. Solid lines show the
    rolling mean; shaded regions indicate standard deviation.
    (a)~Average return per epoch. (b)~Average number of Pareto-optimal
    recovery plans discovered per event.}
    \label{fig:rl_training}
\end{figure}
The return remains close to zero during the first few epochs, then grows
steadily as the agent improves its action-selection strategy. The
convergence plateau is reached after approximately 35--40 epochs,
consistent with the small number of state-action observations per episode
and the linear policy architecture; beyond this point both metrics
stabilise and the variability across runs narrows. The final learned policy exhibits a structured adaptation rule: it favours
\texttt{small-step} advances for minor disruptions such as single travel
delays, and combines \texttt{large-step} jumps with \texttt{bisect}
refinements for systemic events such as multi-day road closures, rapidly
leaving the neighbourhood of $k_{\min}$ to explore more distant regions of
the deviation axis.
\subsubsection{Performance metrics}
\label{sec:metrics}

All strategies operate under the same time budget $T_{\mathrm{total}}$ per
disruption event and share the same computational infrastructure, so the
elapsed time is not itself a discriminating quantity. What separates the
strategies is how they convert this budget into recovery options and how good
those options are. We measure this along three complementary dimensions: the
computational efficiency of the exploration, the operational quality of the best
recovered plan, and the overall spread of the front.

\paragraph{Hit rate.}
The number of Pareto-improving calls per solver call,
\begin{equation}
    \eta \;=\; \frac{|\mathcal{F}^{*}|}{N_{\mathrm{solve}}} \;\in\; (0,1],
    \label{eq:hit_rate}
\end{equation}
where $N_{\mathrm{solve}}$ is the number of solver calls issued for the event
and $|\mathcal{F}^{*}|$ the size of the returned front. Since every plan on the
front originates from exactly one call, $\eta$ is the fraction of the
computational effort that translates into an option for the dispatcher.
$N_{\mathrm{solve}}$ includes the anchor solve at $k_{\min}$, which yields the
first point, and excludes proposals of a value already evaluated, which are
served from memory. A hit rate of one is not attainable in general: certifying
that a region of the staircase contains no further step requires at least one
call that returns none. This metric captures how efficiently the strategy
identifies productive regions of the deviation axis; a strategy that solves in
regions where the staircase is flat wastes calls and reports a low $\eta$.

\paragraph{Cost deviation.}
The relative change in pure routing cost of the best plan of the front, compared
to the baseline,
\begin{equation}
    \Delta\tilde{C} \;=\;
    \frac{\tilde{C}(x^*) - \tilde{C}(\hat{x})}{\tilde{C}(\hat{x})}
    \times 100\%,
    \label{eq:cost_dev}
\end{equation}
where $\tilde{C}(x^*)$ is the lowest pure routing cost attained across the
recovered Pareto points and $\tilde{C}(\hat{x})$ that of the baseline plan. A
value of zero means the disruption has been absorbed at no additional cost, and
a large positive value means the strategy failed to find a plan close to the
pre-disruption cost. Unlike the hit rate, which is agnostic to solution
quality, $\Delta\tilde{C}$ measures the operational impact actually incurred by
the company after recovery: it answers the question of whether the strategy
found a good plan, not merely a lot of plans.

\paragraph{Hypervolume.}
The hit rate rewards a strategy for finding many plans, but says nothing about
how those plans are distributed on the front, and $\Delta\tilde{C}$ reports only
the cheapest plan. Neither captures the case in which a strategy recovers a
large front made of nearby, mutually similar plans: many points, few genuine
options. The standard indicator that penalises this is the hypervolume
\citep{zitzler1999multiobjective}. In the $(k, z)$ plane, where both objectives
are minimised, it is the area dominated by $\mathcal{F}^{*}$ up to a reference
point, normalised by the area of the region in which any front must lie:
\begin{equation}
    HV(\mathcal{F}^{*}) \;=\;
    \frac{1}{(k_{\max} - k_{\min})\,(z^{\mathrm{w}}_e - z_{\mathrm{LP}})}
    \left|
        \bigcup_{(k,\,z)\, \in\, \mathcal{F}^{*}}
        [\,k,\, k_{\max}\,] \times [\,z,\, z^{\mathrm{w}}_e\,]
    \right|,
    \label{eq:hypervolume}
\end{equation}
where $|\cdot|$ denotes area and $z^{\mathrm{w}}_e$ is the highest cost observed
for event $e$ across all strategies, so that the same reference point is used
for all methods on a given event. The lower edge of the box is the
linear-programming floor $z_{\mathrm{LP}}$ of Section~\ref{sec:rl_approach},
which no plan can undercut. The resulting value lies in $[0,1]$ and grows both
with the number of recovered plans and with their coverage of the
stability--cost region: a front of ten mutually close plans is penalised
relative to a front of five well-separated ones. The interest of $HV$ is
comparative, since it summarises the front on a single scale on which
strategies producing fronts of different cardinality can be ranked.
\subsubsection{Comparison of Exploration Strategies}
\label{sec:comparison}

Each of the 20 weekly instances of $\mathcal{W}$ is subjected to the same
protocol as during training: a scenario of 10 disruption events is drawn from
the generator of Section~4.1, spanning road closures, truck breakdowns, travel
delays, and demand variations, so that the reported results cover the full
range of event categories rather than a fixed configuration.
Table~\ref{tab:comparison} reports the three metrics of Section~\ref{sec:metrics}
averaged over the 10 events of each instance, for the three strategies compared
at equal time budget. Training a dedicated policy on each instance takes
approximately 25~hours of wall-clock time on the hardware described in
Section~5.1, comfortably within the operational window between the release of
the weekly plan and the start of execution.

\begin{table}[t]
\centering
\caption{Comparison of the three search strategies.}
\label{tab:comparison}
\small
\setlength{\tabcolsep}{4pt}
\begin{tabular}{l ccc ccc ccc}
\toprule
& \multicolumn{3}{c}{\textbf{Fixed-$k$ grid}}
& \multicolumn{3}{c}{\textbf{Dichotomic search}}
& \multicolumn{3}{c}{\textbf{RL-guided}} \\
\cmidrule(lr){2-4}\cmidrule(lr){5-7}\cmidrule(lr){8-10}
\textbf{Instance}
& $\eta$ & $\Delta\tilde{C}$ & $HV$
& $\eta$ & $\Delta\tilde{C}$ & $HV$
& $\eta$ & $\Delta\tilde{C}$ & $HV$ \\
\midrule
\multicolumn{10}{l}{\textit{Small instances} ($|A| < 30{,}000$)} \\
\midrule
$W_1$    & 26.55 & 4.42 & 0.12 & 43.03 & 5.24 & 0.12 & 57.72 & 4.03 & 0.24 \\
$W_2$    & 31.37 & 6.44 & 0.17 & 52.18 & 8.03 & 0.19 & 82.24 & 3.45 & 0.31 \\
$W_3$    & 33.60 & 6.24 & 0.17 & 50.33 & 5.28 & 0.18 & 79.93 & 3.09 & 0.31 \\
$W_4$    & 37.44 & 7.36 & 0.15 & 52.04 & 4.65 & 0.16 & 63.11 & 3.07 & 0.35 \\
$W_5$    & 29.48 & 5.01 & 0.21 & 40.73 & 9.20 & 0.24 & 57.47 & 2.03 & 0.26 \\
\midrule
\multicolumn{10}{l}{\textit{Medium instances} ($30{,}000 \leq |A| \leq 50{,}000$)} \\
\midrule
$W_6$    & 38.23 & 8.26 & 0.12 & 45.46 & 9.96 & 0.13 & 59.41 & 3.69 & 0.38 \\
$W_7$    & 37.63 & 5.77 & 0.15 & 47.13 & 6.65 & 0.16 & 68.33 & 2.58 & 0.29 \\
$W_8$    & 35.70 & 7.42 & 0.14 & 46.94 & 9.31 & 0.15 & 59.36 & 4.25 & 0.35 \\
$W_9$    & 38.84 & 6.66 & 0.23 & 45.94 & 9.62 & 0.27 & 75.13 & 4.90 & 0.32 \\
$W_{10}$ & 30.28 & 8.38 & 0.19 & 53.41 & 7.58 & 0.22 & 74.93 & 2.43 & 0.38 \\
$W_{11}$ & 28.60 & 4.39 & 0.24 & 44.14 & 7.16 & 0.29 & 58.85 & 4.85 & 0.24 \\
$W_{12}$ & 28.92 & 8.22 & 0.21 & 53.98 & 4.45 & 0.25 & 56.72 & 3.78 & 0.38 \\
$W_{13}$ & 31.83 & 6.83 & 0.22 & 52.91 & 9.01 & 0.26 & 75.34 & 2.29 & 0.33 \\
$W_{14}$ & 27.58 & 4.54 & 0.12 & 53.09 & 5.75 & 0.13 & 80.81 & 3.66 & 0.25 \\
\midrule
\multicolumn{10}{l}{\textit{Large instances} ($|A| > 50{,}000$)} \\
\midrule
$W_{15}$ & 38.41 & 7.03 & 0.24 & 52.64 & 6.40 & 0.29 & 82.08 & 2.30 & 0.33 \\
$W_{16}$ & 27.08 & 4.01 & 0.13 & 48.61 & 5.53 & 0.14 & 77.84 & 1.98 & 0.22 \\
$W_{17}$ & 30.49 & 3.15 & 0.14 & 49.96 & 4.40 & 0.14 & 75.91 & 2.87 & 0.20 \\
$W_{18}$ & 38.87 & 8.76 & 0.18 & 44.45 & 6.68 & 0.19 & 70.22 & 4.71 & 0.39 \\
$W_{19}$ & 39.88 & 8.85 & 0.15 & 44.60 & 9.36 & 0.17 & 77.71 & 4.22 & 0.40 \\
$W_{20}$ & 30.42 & 5.06 & 0.12 & 42.43 & 6.74 & 0.12 & 78.89 & 2.82 & 0.27 \\
\midrule
\textbf{Average}
& \textbf{32.7\%} & \textbf{6.34\%} & \textbf{0.168}
& \textbf{48.2\%} & \textbf{7.05\%} & \textbf{0.195}
& \textbf{70.6\%} & \textbf{3.35\%} & \textbf{0.310} \\
\bottomrule
\end{tabular}
\end{table}

The three metrics point in the same direction across the twenty instances. The
learned policy attains the highest hit rate on every instance, averaging
$70.6\%$ against $48.2\%$ for the dichotomic search and $32.7\%$ for the
fixed-$k$ grid. Since the three strategies operate under identical time budgets
and share the same computational infrastructure -- the cache of evaluated
values, the monotone envelope, the closure of flat intervals, and the warm
start -- this difference is attributable to the selection rule alone. The
learned rule allocates the same effort to regions of the deviation axis where
a Pareto-improving solve is more likely to be found, while the two baselines
either search rigidly (fixed grid) or split only intervals already bracketed
between known points (dichotomic search).

The cost deviation of the best recovered plan follows the same ranking, falling
from $6.34\%$ and $7.05\%$ for the two baselines to $3.35\%$ for the learned
policy. This is the metric most directly interpretable in operational terms:
after recovery, the dispatcher retains, on average, a plan whose routing cost
is within $3.35\%$ of the pre-disruption baseline, against nearly $7\%$ under a
rule that does not exploit information from previous solves. The normalised
hypervolume separates the strategies on a single scale that combines quantity
and quality of plans, and rises consistently across the three columns
($0.168 \to 0.195 \to 0.310$), which indicates that the additional plans
returned by the learned policy do not accumulate in a narrow region of the
front but effectively widen its coverage of the stability--cost region.

The advantage is stable across the three instance size categories. On the six
large instances ($|A| > 50{,}000$), the learned policy averages a hit rate of
$65.5\%$ and a hypervolume of $0.302$, comparable to the values on smaller
instances, which indicates that the advantage of adaptive selection does not
depend on the scale of the underlying network. Two instances warrant a specific
comment. On $W_{11}$, the learned policy reports the highest hit rate
($50.02\%$) but a lower hypervolume ($0.24$) than the dichotomic search
($0.29$): more Pareto-improving solves were found, but distributed in a
narrower region of the front, while the dichotomic search produced fewer plans
more widely spread. On $W_{18}$, the cost deviation of the learned policy
($4.71\%$) is slightly worse than that of the dichotomic search ($6.68\%$), yet
its hypervolume is more than twice as large ($0.39$ vs $0.19$): the policy
covered more of the front but did not reach its lowest point. These two cases
show that the three metrics are not perfectly correlated, which is precisely
why they are reported jointly: a strategy that dominates on one may lose on
another, and the joint reporting allows the reader to assess the trade-off
directly.

Figure~\ref{fig:hypervolume_illustration} makes the interpretation of the
hypervolume tangible on a single event. Both panels use the same reference box
$[k_{\min}, k_{\max}] \times [z_{\mathrm{LP}}, z^{\mathrm{w}}_e]$, so that the
dominated areas of the two fronts are directly comparable.

\begin{figure}[t]
\centering
\begin{tikzpicture}[scale=1.0, every node/.style={font=\footnotesize}]
\definecolor{bisFill}{RGB}{53, 139, 212}
\definecolor{rlFill}{RGB}{232, 93, 36}

\begin{scope}[xshift=0cm]
  \draw[dashed, gray!60] (0,0) rectangle (5,4);
  \node[below] at (0,-0.05) {\scriptsize $k_{\min}$};
  \node[below] at (5,-0.05) {\scriptsize $k_{\max}$};
  \node[left]  at (-0.05,0) {\scriptsize $z_{\mathrm{LP}}$};
  \node[left]  at (-0.05,4) {\scriptsize $z^{\mathrm{w}}_e$};

  \fill[bisFill!25] (1.2,3.4) rectangle (5,4);
  \fill[bisFill!25] (1.9,3.0) rectangle (5,3.4);
  \fill[bisFill!25] (2.8,2.6) rectangle (5,3.0);
  \fill[bisFill!25] (3.3,2.4) rectangle (5,2.6);
  \fill[bisFill!25] (3.8,2.3) rectangle (5,2.4);

  \foreach \x/\y in {1.2/3.4, 1.9/3.0, 2.8/2.6, 3.3/2.4, 3.8/2.3} {
    \fill[bisFill] (\x,\y) circle (2pt);
  }

  \node at (2.5,-0.9) {\textbf{Dichotomic search}};
  \node at (2.5,-1.3) {5 plans, $HV \approx 0.16$};
\end{scope}

\begin{scope}[xshift=8cm]
  \draw[dashed, gray!60] (0,0) rectangle (5,4);
  \node[below] at (0,-0.05) {\scriptsize $k_{\min}$};
  \node[below] at (5,-0.05) {\scriptsize $k_{\max}$};
  \node[left]  at (-0.05,0) {\scriptsize $z_{\mathrm{LP}}$};
  \node[left]  at (-0.05,4) {\scriptsize $z^{\mathrm{w}}_e$};

  \fill[rlFill!25] (0.5,3.4) rectangle (5,4);
  \fill[rlFill!25] (1.0,2.6) rectangle (5,3.4);
  \fill[rlFill!25] (1.6,2.0) rectangle (5,2.6);
  \fill[rlFill!25] (2.2,1.6) rectangle (5,2.0);
  \fill[rlFill!25] (2.8,1.2) rectangle (5,1.6);
  \fill[rlFill!25] (3.3,0.9) rectangle (5,1.2);
  \fill[rlFill!25] (3.8,0.7) rectangle (5,0.9);
  \fill[rlFill!25] (4.3,0.5) rectangle (5,0.7);

  \foreach \x/\y in {0.5/3.4, 1.0/2.6, 1.6/2.0, 2.2/1.6, 2.8/1.2, 3.3/0.9, 3.8/0.7, 4.3/0.5} {
    \fill[rlFill] (\x,\y) circle (2pt);
  }

  \node at (2.5,-0.9) {\textbf{RL-guided policy}};
  \node at (2.5,-1.3) {8 plans, $HV \approx 0.39$};
\end{scope}

\end{tikzpicture}
\caption{Illustration of the normalised hypervolume on a single disruption
event. In each panel, the dashed box is the reference region
$[k_{\min}, k_{\max}] \times [z_{\mathrm{LP}}, z^{\mathrm{w}}_e]$; filled
circles are the non-dominated plans returned by the strategy; the shaded area
is the region dominated by the front. The normalised hypervolume is the ratio
of the shaded area to the area of the reference box. Both strategies start
from the same anchor at $k_{\min}$ and operate under the same solver budget;
only the selection rule differs.}
\label{fig:hypervolume_illustration}
\end{figure}

The two panels of Figure~\ref{fig:hypervolume_illustration} share the same
reference box because the anchor $k_{\min}$, the upper limit $k_{\max}$, the
linear-programming floor $z_{\mathrm{LP}}$, and the worst observed cost
$z^{\mathrm{w}}_e$ are common to all strategies on a given event: they depend
on the event and the model, not on the search rule. The dichotomic search
returns five plans clustered near the anchor and terminates with its best cost
still well above the floor. Its dominated region is the small staircase at the
upper-left of the left panel, covering roughly $16\%$ of the box. The learned
policy, in contrast, does not remain confined to the region bracketed by its
early solves: when the gap to the floor indicates that improvement remains
possible while no active interval is left to split, it performs a long jump,
reaches values of $k$ that the dichotomic search never visits, and recovers
plans that approach the linear-programming floor. Its dominated region is the
much wider staircase of the right panel, covering roughly $39\%$ of the box.
This ratio of $2.4$ between the two hypervolumes is not a difference in the
number of plans alone -- it reflects the additional depth of the front, that
is, the ability to reach lower-cost plans that a purely interval-splitting
rule cannot produce by construction. The mechanism is the same one that
delivers the aggregate advantage in cost deviation and hypervolume observed in
Table~\ref{tab:comparison}.
\subsection{Discussion}

The results reported in the previous section demonstrate that the proposed
framework achieves its core operational objective: providing forestry
dispatchers with a structured set of recovery options within a time frame
compatible with real-time decision-making. The ability to recover, on average,
several non-dominated plans per event -- each representing a distinct
stability--cost trade-off -- fundamentally changes how planners can respond to
unforeseen events. Rather than being presented with a single prescribed
recovery schedule, dispatchers retain full decision authority and can select
the plan that best matches their immediate operational context, whether that
means preserving driver commitments and mill delivery schedules at the cost of
slightly higher routing costs, or accepting a more substantial restructuring
in exchange for meaningful cost savings.

The structural guarantee provided by $k_{\min}$ is of particular practical
value. The first Pareto point, always obtained at the minimum feasible
deviation from the baseline plan, ensures that at least one recovery option is
available within a single anchor call, regardless of the severity of the
disruption. This conservative anchor gives dispatchers a safe baseline that
requires the fewest changes to the pre-established schedule, which is
especially important in forestry operations where drivers, loader operators,
and mill staff organise their activities around the initial plan and where
last-minute changes carry coordination costs that are not fully captured by
the routing objective alone.

The efficiency of the REINFORCE-guided exploration is equally significant
from an operational standpoint. Under an identical time budget of 15 minutes
per event, the learned policy converts $70.6\%$ of its solver calls into
Pareto-improving plans, against $48.2\%$ for the dichotomic search and
$32.7\%$ for the fixed-$k$ grid, while reducing the cost deviation of the best
recovered plan from around $7\%$ to $3.35\%$. Since the three strategies share
the same infrastructure -- the cache of evaluated values, the monotone
envelope, the closure of flat intervals, and the warm start -- this difference
is attributable to the selection rule alone, and it leaves dispatchers with a
substantially larger and better distributed set of options to evaluate before
committing to a recovery. In a context where a truck breakdown on a remote
single-access road can quickly cascade into missed mill delivery windows and
loader idle time, the speed of recovery is as important as its quality.

\paragraph{Practical fit with the weekly planning cycle.}
Training a dedicated policy for each weekly instance may appear to introduce a
significant computational overhead: approximately 25 hours of wall-clock time
on the hardware described in Section~5.1. In practice, this overhead is
absorbed by the operational cycle itself. Tactical planning is completed on
Friday, and the plan is released to dispatchers over the weekend before
execution begins on Monday. During this idle window, no productive solver work
is required, and the policy for the coming week can be trained in the
background. When execution starts on Monday and disruptions begin to occur,
the trained policy is already available and its evaluation at decision time is
instantaneous: a single forward pass of the linear model returns the next
value of $k$, and the only computational cost incurred during recovery is the
local branching solve itself -- the same cost that all three strategies would
incur. Under this arrangement, per-instance training is not a limitation but a
natural fit with the weekly rhythm of forestry operations.

\medskip
The practical deployment of such a framework, however, must account for
several limitations specific to the Canadian forestry sector. First, the
framework assumes that the occurrence time, location, and duration of each
disruption are known at the moment of detection. In practice, the degree of
digitalisation varies considerably across forestry operations. Many companies
still rely on radio communication and manual reporting, which can introduce
delays between the actual occurrence of an event and its formal
characterisation in the system. Under such conditions, the disruption impact
module may operate on incomplete or imprecise information, potentially
leading to recovery plans that are suboptimal once the full extent of the
disruption becomes clear.

Second, the framework assumes that the real-time positions and load status
of all trucks are continuously available, as would be provided by an active
GPS fleet management system. While such systems are increasingly common in
large forestry operations, smaller contractors may not have this
infrastructure in place, limiting the accuracy of the pre-disruption plan
fixing constraints. Third, the stochastic disruption scenarios used in the
computational experiments, while calibrated through discussions with our
industrial partner, cannot fully replicate the spatial correlations and
seasonal patterns of real disruption occurrences, particularly those driven
by weather events that tend to affect multiple roads and sites simultaneously.

Despite these limitations, the framework is designed to be robust to
incomplete information: the local branching constraint ensures that any
feasible recovery plan remains close to the baseline regardless of whether
the disruption characterization is exact, and the Pareto front structure
allows planners to manually favour more conservative options when uncertainty
about the disruption's true extent is high. Future work should investigate
the integration of the framework with existing fleet management and road
monitoring systems, as well as the extension of the stochastic event
generator to incorporate spatial and temporal correlation structures derived
from historical incident data. The development of online learning mechanisms
that update the REINFORCE policy as new disruption patterns emerge over the
course of an operational season also represents a promising direction for
increasing the long-term adaptability of the framework.
\section{Conclusion}
\label{sec:conclusion}
This paper addressed the reoptimization of log-truck routing and scheduling
operations in the Canadian forestry industry, where unforeseen disruptions
frequently invalidate pre-established tactical plans and require rapid
recovery decisions under competing operational objectives.
We proposed an integrated framework that couples a mixed-integer linear
programming formulation of the $\mathcal{LTRSP}$ with a local branching
heuristic whose neighbourhood radius $k$ is adaptively controlled by a
reinforcement learning agent trained via the REINFORCE policy-gradient
algorithm. The framework departs from the conventional single-solution
recovery paradigm by constructing a Pareto front of non-dominated recovery
plans for each disruption event, providing dispatchers with an explicit menu
of stability--cost trade-offs from which the most contextually appropriate
option can be selected. The minimum feasible deviation $k_{\min}$ is computed
exactly prior to each exploration, anchoring the Pareto front at its most
stable extreme and guaranteeing the availability of at least one feasible
recovery plan within a single solver call.

The computational evaluation on twenty weekly instances derived from real
operational data of a Canadian forestry partner demonstrated that the
RL-guided framework consistently recovers richer Pareto fronts than both a
fixed-$k$ grid baseline and a dichotomic search across all disruption types
and network size categories, while operating under identical time budgets.
Since the three strategies share the same computational infrastructure -- the
cache of evaluated values, the monotone envelope, the closure of flat
intervals, and the warm start -- the difference is attributable to the
selection rule alone, and originates in the ability of the learned policy to
leave the region of the deviation axis bracketed by known Pareto points when
the gap to the linear-programming floor indicates that improvement remains
possible. Cost deviations remain controlled and plan stability is preserved
across all tested configurations, confirming that the framework achieves
effective cost recovery while maintaining the continuity of driver schedules,
loader operations, and mill deliveries. The per-instance training cycle
naturally aligns with the weekly rhythm of forestry operations, since the
policy for the coming week can be trained during the weekend and is ready
before execution begins on Monday. These results validate the practical
viability of the approach as a decision-support tool for large-scale forestry
logistics operations.

Several directions remain open for future research. The deployment of the
framework within a fully digitised operational environment, with direct
integration to fleet management and road monitoring systems, would allow
disruption information to flow automatically into the characterization module,
reducing the dependency on manual reporting that currently limits
responsiveness in many Canadian forestry operations. The generalisation of the
Pareto front exploration paradigm to other transportation reoptimization
settings, including rail freight, intermodal logistics, and humanitarian
supply chains, also represents a natural and promising extension of the
contributions presented here.

\appendix
\section{Baseline exploration strategies}
\label{app:baselines}

This appendix describes the two deterministic baselines used in the
computational comparison of Section~\ref{sec:comparison}: the dichotomic search
and the fixed-$k$ grid. Both operate within the same computational
infrastructure as the RL-guided policy -- the cache of evaluated values, the
monotone envelope $\bar{z}(k)$, the closure of flat intervals, and the warm
start of Section~\ref{sec:rl_approach} -- and differ from it only in the rule
that selects the next value of $k$. This design isolates the contribution of
the learned selection rule from that of the shared search structure.

Throughout this appendix, $\mathcal{K} \subseteq [k_{\min}, k_{\max}]$ denotes
the set of values of $k$ already evaluated, $\bar{z}(k)$ the monotone envelope
defined in \eqref{eq:envelope}, and
$\hat{k} = \max \mathcal{K}$ the largest value evaluated so far. Both baselines
share the initialisation of the RL-guided policy: the exact minimal feasible
deviation $k_{\min}$ is computed, the local branching model
$\mathcal{P}^{\mathrm{reop}}_m(k_{\min})$ is solved to obtain the anchor plan
with objective $z_0$, and the linear-programming floor $z_{\mathrm{LP}}$ is
computed once via the continuous relaxation at $k_{\max}$.

\subsection{Dichotomic search}
\label{app:ds}

The dichotomic search subdivides the intervals of the deviation axis where a
non-dominated plan has been shown to lie. Its behaviour is driven by the set
of \emph{active intervals}, defined as in Section~\ref{sec:rl_approach}. A pair
of consecutive evaluated values $k_1 < k_2$ with $k_2 - k_1 > 1$ is an active
interval if $\bar{z}(k_1) > \bar{z}(k_2)$: the envelope drops strictly between
$k_1$ and $k_2$, so a non-dominated plan may lie in $(k_1, k_2)$. When the
envelope does not drop, the pair is provably flat and is removed from
consideration once and for all.

At each iteration, the rule proceeds as follows. If at least one active
interval remains, the interval with the largest envelope drop
$\bar{z}(k_1) - \bar{z}(k_2)$ is selected and its midpoint
$k = \lfloor (k_1 + k_2)/2 \rfloor$ is evaluated. If no active interval remains
but the search has not yet reached the upper limit $k_{\max}$, the next value
is $\hat{k} + \delta_{\mathrm{s}}$, where the small step $\delta_{\mathrm{s}}$
is the same as in the RL policy, so that the sweep extends into the region
beyond $\hat{k}$. If neither operation applies -- no active interval and
$\hat{k} = k_{\max}$ -- the search terminates.

\begin{figure}[H]
\centering
\noindent\rule{\linewidth}{0.8pt}\\[-3pt]
\noindent\textbf{Algorithm 1.} Dichotomic search on the deviation radius.\\[-4pt]
\noindent\rule{\linewidth}{0.4pt}

\begin{tabbing}
xx\=xxxx\=xxxx\=xxxx\=\kill
1: \> Compute $k_{\min}$; solve $\mathcal{P}^{\mathrm{reop}}_m(k_{\min})$;
     set $\mathcal{K} \gets \{k_{\min}\}$, $\mathcal{F}^{*} \gets \{(k_{\min}, z_0)\}$ \\
2: \> \textbf{while} time budget not exhausted \textbf{do} \\
3: \> \> $\mathcal{I} \gets \{(k_1, k_2) : k_1, k_2 \in \mathcal{K},\
       k_2 - k_1 > 1,\ \bar{z}(k_1) > \bar{z}(k_2),\ (k_1,k_2)\ \text{not closed}\}$ \\
4: \> \> \textbf{if} $\mathcal{I} \neq \emptyset$ \textbf{then} \\
5: \> \> \> $(k_1^*, k_2^*) \gets \arg\max_{(k_1,k_2) \in \mathcal{I}}
       \bigl(\bar{z}(k_1) - \bar{z}(k_2)\bigr)$ \\
6: \> \> \> $k \gets \lfloor (k_1^* + k_2^*)/2 \rfloor$ \\
7: \> \> \textbf{else if} $\hat{k} < k_{\max}$ \textbf{then} \\
8: \> \> \> $k \gets \min\{\hat{k} + \delta_{\mathrm{s}},\, k_{\max}\}$ \\
9: \> \> \textbf{else} \textbf{stop} \\
10: \> \> \textbf{end if} \\
11: \> \> Solve $\mathcal{P}^{\mathrm{reop}}_m(k)$; update $\mathcal{K}$,
        $\bar{z}$, $\mathcal{F}^{*}$; close flat intervals \\
12: \> \textbf{end while} \\
13: \> \textbf{return} $\mathcal{F}^{*}$
\end{tabbing}

\vspace{-6pt}
\noindent\rule{\linewidth}{0.8pt}
\end{figure}

The dichotomic search is well-suited to localising non-dominated plans that
lie \emph{between} known Pareto points, since by construction each bisection
halves the range of values of $k$ in which such a plan may lie. Its structural
limitation is that it can only refine the region already bracketed by
$\mathcal{F}^{*}$: when the largest evaluated value $\hat{k}$ is smaller than
the location of an undiscovered non-dominated plan, no active interval brackets
that plan, and the rule advances by a small step $\delta_{\mathrm{s}}$ at a
time. Under a finite time budget, this behaviour prevents the search from
reaching plans that lie well beyond $\hat{k}$, which is the situation
illustrated in Figure~\ref{fig:hypervolume_illustration}.

\subsection{Fixed-$k$ grid}
\label{app:fixed_grid}

The fixed-$k$ grid evaluates $\mathcal{P}^{\mathrm{reop}}_m(k)$ at a set of
values of $k$ chosen \emph{a priori}, independently of the outcome of previous
solves. Given a target number of grid points $N_g$, the interval
$[k_{\min}, k_{\max}]$ is partitioned into $N_g$ equally spaced values
\begin{equation}
    k_i \;=\; k_{\min} + \left\lfloor i \cdot \frac{k_{\max} - k_{\min}}{N_g - 1} \right\rfloor,
    \qquad i = 0, 1, \ldots, N_g - 1,
    \label{eq:grid}
\end{equation}
which are evaluated in increasing order. In the experiments of
Section~\ref{sec:comparison}, $N_g = 10$; other values were investigated
during preliminary tuning and yielded comparable trends.

\begin{figure}[H]
\centering
\noindent\rule{\linewidth}{0.8pt}\\[-3pt]
\noindent\textbf{Algorithm 2.} Fixed-$k$ grid.\\[-4pt]
\noindent\rule{\linewidth}{0.4pt}

\begin{tabbing}
xx\=xxxx\=xxxx\=\kill
1: \> Compute $k_{\min}$, $k_{\max}$ \\
2: \> Build the grid $\{k_0, k_1, \ldots, k_{N_g - 1}\}$ according to
     Equation~\eqref{eq:grid} \\
3: \> $\mathcal{K} \gets \emptyset$, $\mathcal{F}^{*} \gets \emptyset$ \\
4: \> \textbf{for} $i = 0, 1, \ldots, N_g - 1$ \textbf{do} \\
5: \> \> \textbf{if} time budget exhausted \textbf{then} \textbf{stop} \textbf{end if} \\
6: \> \> Solve $\mathcal{P}^{\mathrm{reop}}_m(k_i)$; update $\mathcal{K}$,
        $\bar{z}$, $\mathcal{F}^{*}$ \\
7: \> \textbf{end for} \\
8: \> \textbf{return} $\mathcal{F}^{*}$
\end{tabbing}

\vspace{-6pt}
\noindent\rule{\linewidth}{0.8pt}
\end{figure}
The fixed-$k$ grid is the natural reference for the computational effort
required in the absence of any adaptive rule. Because it ignores the outcome
of previous solves, the same schedule of values is evaluated regardless of the
shape of the staircase $z^{*}(\cdot)$ induced by the disruption. Grid values
that fall on flat regions of the staircase consume solver calls without
extending the front, and closely spaced non-dominated plans between two grid
values are missed. In experiments, the resulting hit rate is systematically
lower than that of the dichotomic search or of the learned policy
(Table~\ref{tab:comparison}), which quantifies the value of adapting the
selection rule to the information revealed by previous solves.
\end{document}